\newtheorem{teorem}{Theorem}[section]
\newtheorem{proposition}[teorem]{Proposition}
\newtheorem{lemma}[teorem]{Lemma}
\theoremstyle{remark}
\definecolor{orange}{rgb}{1,.549,0}
 \definecolor{GreenYellow }{rgb}{ 0.15,   0.69, 0} 
\definecolor{Yellowone}{rgb}{ 0, 1., 0} 
\definecolor{Goldenrod }{rgb}{  0, 0.10, 0.84} 
\definecolor{Dandelion }{rgb}{ 0, 0.29, 0.84} 
\definecolor{Apricot }{rgb}{ 0, 0.32, 0.52} 
\definecolor{Peach }{rgb}{ 0, 0.50, 0.70} 
\definecolor{GreenYellow}{cmyk}{0.15,0,0.69,0}
\definecolor{RoyalPurple}{cmyk}{0.75,0.90,0,0}
\definecolor{Yellow}{cmyk}{0,0,1,0}
\definecolor{BlueViolet}{cmyk}{0.86,0.91,0,0.04}
\definecolor{Goldenrod}{cmyk}{0,0.10,0.84,0}
\definecolor{Periwinkle}{cmyk}{0.57,0.55,0,0}
\definecolor{Dandelion}{cmyk}{0,0.29,0.84,0}
\definecolor{CadetBlue}{cmyk}{0.62,0.57,0.23,0}
\definecolor{Apricot}{cmyk}{0,0.32,0.52,0}
\definecolor{CornflowerBlue}{cmyk}{0.65,0.13,0,0}
\definecolor{Peach}{cmyk}{0,0.50,0.70,0}
\definecolor{MidnightBlue}{cmyk}{0.98,0.13,0,0.43}
\definecolor{Melon}{cmyk}{0,0.46,0.5,0}
\definecolor{NavyBlue}{cmyk}{0.94,0.54,0,0}
\definecolor{YellowOrange}{cmyk}{0,0.42,1,0}
\definecolor{RoyalBlue}{cmyk}{1,0.50,0,0}
\definecolor{Orange}{cmyk}{0,0.61,0.87,0}
\definecolor{Blue}{cmyk}{1,1,0,0}
\definecolor{BurntOrange}{cmyk}{0,0.51,1,0}
\definecolor{Cerulean}{cmyk}{0.94,0.11,0,0}
\definecolor{Bittersweet}{cmyk}{0,0.75,1,0.24}
\definecolor{Cyan}{cmyk}{1,0,0,0}
\definecolor{RedOrange}{cmyk}{0,0.77,0.87,0}
\definecolor{ProcessBlue}{cmyk}{0.96,0,0,0}
\definecolor{Mahogany}{cmyk}{0,0.85,0.87,0.35}
\definecolor{SkyBlue}{cmyk}{0.62,0,0.12,0}
\definecolor{Maroon}{cmyk}{0,0.87,0.68,0.32}
\definecolor{Turquoise}{cmyk}{0.85,0,0.20,0}
\definecolor{BrickRed}{cmyk}{0,0.89,0.94,0.28}
\definecolor{TealBlue}{cmyk}{0.86,0,0.34,0.02}
\definecolor{Red}{cmyk}{0,1,1,0}
\definecolor{Aquamarine}{cmyk}{0.82,0,0.30,0}
\definecolor{OrangeRed}{cmyk}{0,1,0.50,0}
\definecolor{BlueGreen}{cmyk}{0.85,0,0.33,0}
\definecolor{RubineRed}{cmyk}{0,1,0.13,0}
\definecolor{Emerald}{cmyk}{1,0,0.50,0}
\definecolor{WildStrawberry}{cmyk}{0,0.96,0.39,0}
\definecolor{JungleGreen}{cmyk}{0.99,0,0.52,0}
\definecolor{Salmon}{cmyk}{0,0.53,0.38,0}
\definecolor{SeaGreen}{cmyk}{0.69,0,0.50,0}
\definecolor{CarnationPink}{cmyk}{0,0.63,0,0}
\definecolor{Green}{cmyk}{1,0,1,0}
\definecolor{Magenta}{cmyk}{0,1,0,0}
\definecolor{ForestGreen}{cmyk}{0.91,0,0.88,0.12}
\definecolor{VioletRed}{cmyk}{0,0.81,0,0}
\definecolor{PineGreen}{cmyk}{0.92,0,0.59,0.25}
\definecolor{Rhodamine}{cmyk}{0,0.82,0,0}
\definecolor{LimeGreen}{cmyk}{0.50,0,1,0}
\definecolor{Mulberry}{cmyk}{0.34,0.90,0,0.02}
\definecolor{YellowGreen}{cmyk}{0.44,0,0.74,0}
\definecolor{RedViolet}{cmyk}{0.07,0.90,0,0.34}
\definecolor{SpringGreen}{cmyk}{0.26,0,0.76,0}
\definecolor{Fuchsia}{cmyk}{0.47,0.91,0,0.08}
\definecolor{OliveGreen}{cmyk}{0.64,0,0.95,0.40}
\definecolor{Lavender}{cmyk}{0,0.48,0,0}
\definecolor{RawSienna}{cmyk}{0,0.72,1,0.45}
\definecolor{Thistle}{cmyk}{0.12,0.59,0,0}
\definecolor{Sepia}{cmyk}{0,0.83,1,0.70}
\definecolor{Orchid}{cmyk}{0.32,0.64,0,0}
\definecolor{Brown}{cmyk}{0,0.81,1,0.60}
\definecolor{DarkOrchid}{cmyk}{0.40,0.80,0.20,0}
\definecolor{Tan}{cmyk}{0.14,0.42,0.56,0}
\definecolor{Purple}{cmyk}{0.45,0.86,0,0}
\definecolor{Gray}{cmyk}{0,0,0,0.50}
\definecolor{Plum}{cmyk}{0.50,1,0,0}
\definecolor{Black}{cmyk}{0,0,0,1}
\definecolor{Violet}{cmyk}{0.79,0.88,0,0}
\definecolor{White}{cmyk}{0,0,0,0}
 \definecolor{rltred}{rgb}{0.75,0,0}
   \definecolor{rltgreen}{rgb}{0,0.5,0}
   \definecolor{oneblue}{rgb}{0,0,0.75}
   \definecolor{marron}{rgb}{0.64,0.16,0.16}
   \definecolor{forestgreen}{rgb}{0.13,0.54,0.13}
   \definecolor{purple}{rgb}{0.62,0.12,0.94}
   \definecolor{dockerblue}{rgb}{0.11,0.56,0.98}
   \definecolor{freeblue}{rgb}{0.25,0.41,0.88}
   \definecolor{myblue}{rgb}{0,0.2,0.4}
 \definecolor{Melon}{rgb}{ 0.46, 0.50, 0}
 \definecolor{Melone}{rgb}{ 0, 0.46, 0.50}
\begin{document}
\title{On covariants in exterior algebras for the even special orthogonal group}
\author{Salvatore Dolce\footnote{Universit\`a La Sapienza di Roma}}
\maketitle
Let $G:=SO(2n)$ be the even special orthogonal group over $\mathbb{C}$ and let $M_{2n}^+$ (resp. $M_{2n}^-$) be the space of symmetric (resp. skew-symmetric) complex matrices with respect to the usual transposition. 

We study the structure of the space $B^+:=\left(\bigwedge (M_{2n}^{+})^*\otimes M_{2n}^-\right)^G$, the space of $G-$equivariant skew-symmetric matrix valued alternating multilinear maps on the space of symmetric $n-$tuples of matrices, with $G$ acting by conjugation.

We prove that $B^+$ is a free module over a certain subalgebra of invariants $A:=\left(\bigwedge (M_{2n}^{+})^*\right)^G$ of rank $2n$. We give an explicit description for the basis of this module. Furthermore we prove new trace polynomial identities for symmetric matrices.  

Finally we show, using a computation made with the LiE software, that the analogous module $B^-:=\left(\bigwedge (M_{2n}^{+})^*\otimes M_{2n}^+\right)^G$ doesn't satisfy a similar property.

\section*{Introduction}

The main goal of this paper is to study the structure of the space $B:=\left(\bigwedge (M_{2n}^+)^*\otimes M_{2n}\right)^G$, which could be seen as the space of the $G-$equivariant matrix valued alternating multilinear maps on the space of symmetric $n-$tuples of matrices, with $G=SO(2n)$ the even special orthogonal group acting by conjugation.

We put on $B$ a structure of algebra and then we see $B$ as a module over the algebra of invariants $A:=\left(\bigwedge (M_{2n}^+)\right)^G$ in a natural way. For this purpose, let us recall some classical notations and conventions (for more details see \cite{Dolce}).

By the {\em antisymmetrizer} we mean the operator that sends a multilinear application\\ $f(x_1,\ldots,x_h)$ into the antisymmetric application $ \sum_{\sigma\in  S_h} \epsilon_\sigma f(x_{\sigma(1)},\ldots,x_{\sigma(h)})$. The main example for us is obtained applying the antisymmetrizer to the noncommutative monomial $x_1\cdots x_h$. We get the standard polynomial of degree $h$, which is by definition:  $$St_h(x_1,\ldots,x_h):=\sum_{\sigma\in  S_{h}}\epsilon_\sigma   x_{\sigma(1)} \dots x_{\sigma(h)}.$$
Let $R$ be any algebra (not necessarily associative) over a field $\mathbb{F}$, and let $V$ be a finite dimensional vector space  over $\mathbb F$. The set of  multilinear  antisymmetric maps from $V^k$ to $R$ can be identified in a natural way with $\bigwedge^kV^*\otimes R$.  Using the algebra structure of $R$ we get a product of these maps (which we denote by $\wedge$); for $G\in \bigwedge^hV^*\otimes R,\ H\in \bigwedge^kV^*\otimes R$ we define
$$(G\wedge H)( v_1,\ldots,v_{h+k}):=\frac{1}{h!k!}\sum_{\sigma\in S_{h+k}}\epsilon_\sigma G (v_{\sigma(1)},\ldots,v_{\sigma(h)})H(v_{\sigma(h+1) },\ldots,v_{\sigma(h+k)}) $$
$$=\sum_{\sigma\in  S_{h+k}/ S_{h}\times  S_{k}}\epsilon_\sigma G (v_{\sigma(1)},\ldots,v_{\sigma(h)})H(v_{\sigma(h+1) },\ldots,v_{\sigma(h+k)}).$$
It is easy to show (see \cite{Pr1}) that $St_a\wedge St_b=St_{a+b} $.

With this multiplication 
the algebra of multilinear  antisymmetric maps from $V $ to $R$ is isomorphic to the tensor product algebra $\bigwedge V^*\otimes R$.  

Assume now that $R$ is an associative algebra and $V\subset R$.  The inclusion map $X:V\to R$ is of course antisymmetric, since the symmetric group on one variable is trivial, hence $X\in \bigwedge V^*\otimes R$. By iterating the definition of wedge product we have that as a multilinear function, each power  $X^a:=X^{\wedge a}$  equals the standard polynomial $St_a$  computed in $V$.

We apply previous considerations to $\mathbb{F}=\mathbb{C}$, $R=M_{2n}$ and $V=M_{2n}^\pm$ (symmetric or skew-symmetric matrices);  the group $G=SO(2n)$ acts on this space, and hence on functions, by conjugation and it is interesting to study the algebra of $G$-equivariant maps 
$$B:=\left(\bigwedge \left(M_{2n}^\pm\right)^*\otimes M_{2n}\right)^G.$$
Furthermore we can clearly decompose $B$ as the direct sum $B\simeq B^+\oplus B^-$, where $B^\mp:= \left(\bigwedge (M_{2n}^{\bullet})^*\otimes M_{2n}^{\pm}\right)^G.$

The main part of this paper is devoted to study the spaces of type $B^+$ for symmetric matrices (the skew-symmetric one was already studied in \cite{PDP}), in particular their natural structure as a module over the algebra of invariants $A:=\left(\bigwedge (M_{2n}^{+})^*\right)^G$.

By a classical result on the algebra structure of invariants $A$, which we recover in the paper, we have that $A$ is the symmetric product of the exterior algebra in certain elements $T_i$ (where $i=0,...,n-1$) of degree $4i+1$ and the one dimensional algebra generated by an element $Q$ of even degree $2n$, with $Q\wedge Q=0$ (see \cite{Tak}).

We prove that $B^+$ is a free module of rank $2n$ over the subalgebra $A_{n}\subset A$ generated by $T_0,...,T_{n-2},Q$. Further we write an explicit basis for this module and the explicit relations for the product with the missing invariant. Let us remark that also in the case of skew-symmetric matrices in \cite{PDP} was proved that $B^+$ satisfies an analogous statement.

Finally, using a computer computation made with the software LiE, we show that the case $B^-$ cannot have an analogous property.

Let us remark that spaces analogous to $B$ for the other classical groups have been already studied. In fact in \cite{Dolce} the spaces $B:=\left(\bigwedge (M_{m}^{\pm})^*\otimes M_{m}\right)^G$, with $G$ the symplectic or the odd special orthogonal, appear as particular cases of covariants of infinitesimal symmetric spaces such that cohomology of the associated compact symmetric space is an exterior algebra.

\section{The main case: $\left(\bigwedge (M_{2n}^+)^*\otimes M_{2n}^-\right)^G$}
\subsection{Dimension}
Let us recall that there is a natural $GL(V)-$equivariant isomorphism $M_{2n}^+\simeq S^2(V)$, where $V=\mathbb{C}^{2n}$ and $G$ acts on $V$ by $g\cdot v:=gv$ (the usual product matrix by vector). We always identify these representations in the following. As in \cite{Dolce} we index the irreducible representations of $GL(V)$ by Young diagrams with at most $2n$ columns (the row of length $k$ corresponds to $\bigwedge ^kV$). The irreducible corresponding to the diagram $\lambda$ will be denoted by $S_{\lambda}(V)$. We decompose (see \cite{MD}) the space $\bigwedge (M_{2n}^+)^*$ as direct sum of irreducible representation of type  $H_{\underline a}^+(V)$ (with respect to the linear group) and study the invariants with respect to the even special orthogonal group. Let us recall that the Young diagram  $\lambda(\underline a)$ corresponding to the representation $H_{\underline a}^+(V)$ is built by nesting   the  {\em hook} diagrams $h_{a_i}$  whose column is of length $a_i+2$ and whose row is of the length $a_i+1$.

As an example, the diagram corresponding to $\lambda(4,3,1)$ is
$$
\begin{Young}
      &&&&\cr
     &&&& \cr
     &&& \cr
     &\cr
\end{Young}$$

In this case to compute the invariants, in addiction to compute the diagrams $H^+$ with even columns, we have to consider the diagrams which have the first row of maximal length and, when we delete it, which of these have even columns. In fact we can state:
\begin{lemma}
We have that $dim (S_{\tilde{\lambda}}(V))^{SO(2n)}=1$ if and only if $\tilde{\lambda}=2\lambda,2\lambda+1^m$, where $m=dim(V)$.
\end{lemma}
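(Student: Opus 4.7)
The plan is to reduce the computation of $SO(2n)$-invariants in $S_{\tilde\lambda}(V)$ to the classical computation of $O(2n)$-invariants, by using the index-two inclusion $SO(2n)\subset O(2n)$. The starting point is the identity
\[
\dim M^{SO(2n)} \;=\; \dim M^{O(2n)} \;+\; \dim\bigl(M\otimes\det\bigr)^{O(2n)},
\]
valid for any $O(2n)$-representation $M$. This follows either by writing $v=\tfrac12(v+\tau v)+\tfrac12(v-\tau v)$ for $v\in M^{SO(2n)}$ and any reflection $\tau\in O(2n)\setminus SO(2n)$, or via Frobenius reciprocity together with $\mathrm{Ind}^{O(2n)}_{SO(2n)}\mathbf 1=\mathbf 1\oplus\det$.

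I would then combine this with two classical inputs. The first is Weyl's theorem on orthogonal invariants: $\dim S_\mu(V)^{O(2n)}=1$ when the partition $\mu$ has all even parts ($\mu=2\nu$), and $0$ otherwise. The second is the $GL(V)$-equivariant isomorphism
\[
S_{\tilde\lambda}(V)\otimes \det(V) \;\cong\; S_{\tilde\lambda+1^m}(V),
\]
valid for $\ell(\tilde\lambda)\le m=2n$, reflecting that tensoring by $\det=\bigwedge^{m}V$ shifts the Young diagram by a full column of height $m$. Putting the three together,
\[
\dim S_{\tilde\lambda}(V)^{SO(2n)} \;=\; [\,\tilde\lambda=2\lambda\,] \;+\; [\,\tilde\lambda+1^m=2\lambda'\,].
\]

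The first indicator is $1$ exactly when $\tilde\lambda=2\lambda$. The second is $1$ when every $\tilde\lambda_i+1$ with $i\le m$ is even and $\tilde\lambda_i=0$ for $i>m$; since $\ell(\tilde\lambda)\le m$ in any case, this forces $\tilde\lambda=2\lambda'+1^m$ with $\lambda'$ having at most $m$ parts. The two conditions are mutually exclusive (one demands all parts even, the other all of the first $m$ parts odd), so the dimension is always $0$ or $1$, and equals $1$ precisely in the two families listed in the lemma.

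No serious obstacle arises: the entire argument consists of two applications of standard branching, and the only thing that must be watched is conventions, since the paper uses a transposed labelling of Young diagrams (rows correspond to exterior powers). In particular, the \emph{even columns} terminology used in the paragraph preceding the lemma corresponds to what is usually called \emph{all parts even} in standard partition language, so the two families in the lemma match, respectively, the ``even column'' diagrams and the ``first row of maximal length, rest even column'' diagrams singled out in the preceding discussion.
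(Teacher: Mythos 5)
Your proof is correct, but it follows a genuinely different route from the paper's. The paper never leaves the group $SO(2n)$: it applies Cauchy's formula to $S(V\otimes W)$ with $\dim W=m$ and then invokes the first fundamental theorem for $SO(V)$ acting on $m$ vectors, identifying the ring of invariants as $S(S^2W)\oplus \bigl(S(S^2W)\otimes\bigwedge^m W\bigr)$ as a $GL(W)$-module and reading off the plethysms $S(S^2W)=\bigoplus_\lambda S_{2\lambda}(W)$ and $S(S^2W)\otimes\bigwedge^m W=\bigoplus_\lambda S_{2\lambda+1^m}(W)$. You instead take Weyl's description of the $O(2n)$-invariants in $S_\mu(V)$ as a black box and descend to $SO(2n)$ by Clifford theory for the index-two subgroup, via $\dim M^{SO(2n)}=\dim M^{O(2n)}+\dim(M\otimes\det)^{O(2n)}$ together with the column-adding isomorphism $S_{\tilde\lambda}(V)\otimes\bigwedge^{m}V\cong S_{\tilde\lambda+1^m}(V)$. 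The two splittings reflect the same phenomenon --- the paper's summand $S(S^2W)\otimes\bigwedge^mW$, coming from invariants involving one bracket $u_{i_1}\wedge\cdots\wedge u_{i_m}$, is exactly the $\det$-isotypic piece that your second indicator detects --- but your argument avoids the $SO$ first fundamental theorem entirely, at the price of assuming the $O(2n)$ branching multiplicity, whereas the paper's is self-contained modulo the FFT. Both arguments correctly give that the two families are disjoint and that each contributes multiplicity exactly one, so the ``if and only if'' is fully established. Your closing remark on the transposed row/column convention is also worth keeping, since the paper's ``even columns'' are the standard ``all parts even'' and the family $2\lambda+1^m$ corresponds to the diagrams with first row of maximal length singled out before the lemma.
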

\begin{proof}
Let us consider $m$ copies of $V$ which we display as $V\otimes W$, dim$(W)=m$. In this setting the linear group $GL(m)$ acts on this vector space by tensor action on the second factor, commuting with the $SO(V)$ action on $V$. By Cauchy's formula (see \cite{Pr2}),
$$S(V\otimes W)^{SO(V)}=\bigoplus_{\lambda,\, ht(\lambda)\leq m} S_{\lambda}(V)^{SO(V)}\otimes S_{\lambda}(W).$$
On the other hand we know (see \cite{Pr2}) that the special orthogonal invariants of $m$ copies of $V$ are generated by the scalar products $(u_i,u_j)$ or the determinants $u_{i_1}\wedge\cdots\wedge u_{i_m}$. As a representation of $GL(m)=GL(W)$ the ring they generate is $$S\left(S^2(W)\right)\oplus \left(S\left(S^2(W)\right)\otimes\bigwedge^m (W)\right).$$ We recall (see \cite{Pr2}) that as a representation of $GL(m)$:
$$S(S^2(W))\otimes\bigwedge^m (W)\simeq \bigoplus_{\lambda}S_{2\lambda +1^m}(W),$$
so we have
$$S(V\otimes W)^{SO(V)}=\bigoplus_\lambda S_{2\lambda}(W)\oplus S_{2\lambda +1^m}(W).$$
 then our claim follows.
\end{proof}

So we can prove that
\begin{lemma}\label{evendim}
The dimension of the space of invariants $(\bigwedge M_{2n}^+)^G$ is $2^{n+1}$.
\begin{proof}
We know that diagrams of the first type (even rows) correspond to the sequences\\
 $2n>a_1>a_1-1>a_2>a_2-1>...\ge 0$, with $a_i$ even. So their number is $2^n$. 
 
On the other hand diagrams of the second type correspond to the sequences  
$$2n-1>a_1>a_1-1>...\ge 0,$$ with $a_i$ even. So as before their number is $2^n$. By adding their contributes we have the thesis.

\end{proof}
\end{lemma}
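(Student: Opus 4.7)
The plan is to pass from the invariant dimension to a combinatorial count by exploiting the $GL(V)$-isotypic decomposition recalled from \cite{MD}. First I would write
\[
\bigwedge (M_{2n}^+)^* \;=\; \bigoplus_{\underline a} H_{\underline a}^+(V) \;=\; \bigoplus_{\underline a} S_{\lambda(\underline a)}(V),
\]
where $\underline a$ ranges over the admissible strictly decreasing sequences indexing the nested-hook partitions. Since taking $SO(2n)$-invariants commutes with direct sums, and by the preceding lemma each summand contributes either $0$ or a one-dimensional piece (depending on whether $\lambda(\underline a)$ is of the form $2\mu$ or $2\mu + 1^{2n}$, or neither), the task reduces to enumerating the sequences $\underline a$ for which $\lambda(\underline a)$ has one of these two admissible shapes.

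The next step, and the combinatorial heart of the proof, is to translate the two shape conditions into conditions on $\underline a$. Here I would unpack the nested hook construction: because each hook $h_{a_i}$ contributes a block of columns whose lengths are controlled by $a_i$ and $a_i-1$ in two consecutive positions, the constraint ``all columns of $\lambda(\underline a)$ are of even length'' is equivalent to each $a_i$ being even together with the chained inequality $2n > a_1 > a_1 - 1 > a_2 > a_2 - 1 > \cdots \geq 0$. An analogous unpacking, after stripping off a prepended top row of length $2n$, yields the second-type condition $2n - 1 > a_1 > a_1 - 1 > a_2 > \cdots \geq 0$ with each $a_i$ even. This is the main obstacle: the Young-diagram bookkeeping that forces the column parities is not entirely formal and has to be checked against the nesting rule for the diagrams $\lambda(\underline a)$.

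Once these two dictionaries are in hand, the count becomes elementary. For each of the two types, a valid sequence is determined by the choice of the subset $\{a_1, a_2, \ldots\} \subset \{0, 2, 4, \ldots, 2n-2\}$, because parity together with the bound $a_1 \leq 2n-2$ automatically ensures the gap $a_{i-1} - a_i \geq 2$ required by the chained strict inequalities. Since $\{0, 2, \ldots, 2n-2\}$ has $n$ elements, each type contributes $2^n$ sequences (the empty sequence corresponding to the trivial representation in type I and to $\bigwedge^{2n} V$ in type II). Finally, the two families are disjoint, since removing a complete top row from a partition whose columns all have even length turns every column into an odd one, so no partition can simultaneously satisfy both conditions. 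Summing the two contributions gives $\dim (\bigwedge M_{2n}^+)^G = 2^n + 2^n = 2^{n+1}$, which is the claim.
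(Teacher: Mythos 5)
Your proposal is correct and follows essentially the same route as the paper: reduce via the hook decomposition of $\bigwedge(S^2V)$ and the preceding lemma to counting the admissible sequences $\underline a$, observe that each type is parametrized by a subset of the $n$ even integers in $\{0,2,\dots,2n-2\}$, and add the two contributions of $2^n$. Your explicit check that the two families of diagrams are disjoint is a detail the paper leaves implicit, but it does not change the nature of the argument.
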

\begin{proposition}\label{dimpari}
The dimension of the space $(\bigwedge (M_{2n}^+)^*\otimes M_{2n}^-)^G$ is $(2n)2^{n}$.
\begin{proof}
Let us identify
$$\bigwedge (M_{2n}^+)^*\otimes M_{2n}^-\simeq \bigwedge (S^2V)\otimes \bigwedge^2 V\simeq \bigoplus_{\underline{a}}H_{\underline{a}}^+(V)\otimes \bigwedge^2(V)$$
then by using the Pieri's formula we know (see \cite{Pr2}) how to decompose each $H_{\underline{a}}^+(V)\otimes \bigwedge^2(V)$:
$$H_{\underline{a}}^+(V)\otimes \bigwedge^2(V)=\bigoplus_{\lambda\in\{\underline{a}\}_2}S_{\lambda}(V),$$
where $\{\underline{a}\}_2:=\{\lambda|\ \underline{a}\subset \lambda,\ |\lambda|=|\underline{a}|+2\ \mbox{and each column}\ k_i\ \mbox{of}\ \lambda\ \mbox{satisfies}\ a_i+2\leq k_i\leq a_i+3\}$

Briefly we have to compute diagrams of type $H_{\underline{a}}^+(V)$ such that when we add two boxes (not on the same column) they have one of the two previous forms.\\
First of all we analyze contributes of the first type. We want to prove that their number is $n2^n$. The case $n=1$ is obvious so let us suppose $n>1$.\\
Let us start by studying the case $a_1<2n-1=2(n-1)+1$. In this case we have that the contribute is given by the odd orthogonal case for $2(n-1)+1$ and so
(see \cite{Dolce}) it is $(n-1)2^n$.\\
Let us remark that this time we could do another operation with respect to that case: when $a_1=2n-2$ we could add a box to the first row, but in this case we need $a_2=2n-3$ and then we must add the second box to the second row

$$
\begin{Young}
      &&&X\cr
     &&&X \cr
     & \cr
     &\cr
\end{Young}$$

but with this operation we have two boxes on the same column. So we have no contributes of this type.

The other possibility is $a_1=2n-1$. There are only two way to add boxes.
In each case we have to add the first box to the first column and when
 $a_2=2n-2$ we have to add the second box to the second column
 
 $$
\begin{Young}
      &&\cr
     && \cr
     & \cr
     &\cr
     X&X\cr
\end{Young}$$

so in this case we have to compute subdiagrams with even columns of type 
$2n-2=2(n-1)>b_1>...$ and we know that they are $2^{n-1}$.

When $a_2=2n-3$ we have to add the second box to the second row
$$
\begin{Young}
      &&\cr
     &&X \cr
     & \cr
     \cr
     X\cr
\end{Young}$$

and we have to compute subdiagrams of type $2n-3>b_1>...$ which are $2^{n-1}$. By adding contributes we have $n2^n$. 

Let us consider now contributes given by diagrams of the second type.

We preliminarily distinguish two cases. One is starting from a row of maximal length ($2n$), we are going to discuss this case later.\\
The second case is when the first row is not of maximal length, then the only cases that we can consider are: $a_1=2n-2$ and $a_1=2n-3$.\\
If $a_1=2n-2$ then by adding a box to the first row and then by deleting it we obtain a diagram with the first row of length $2n-1$ and we have to add the second box to the first column. So we have to compute subdiagrams of type $2n-2=2(n-1)>b_1>...$ with even columns that we know to be $2^{n-1}$.\\
If $a_1=2n-3$ by adding two boxes to the first row and then deleting it we have to compute subdiagrams of type $2n-3>b_1>...$ with even columns which are $2^{n-1}$.  \\
Let us come back to $a_1=2n-1$. We proceed by induction. We state that they are $(n-1)2^{n}$.\\
We are considering diagrams $H_{\underline{a}}(V)$ with $\underline{a}$ of type $2n-1>b_1>...$ in which we added on the left a column of length $2n$.\\
We start by considering $2n-3=2(n-2)+1>b_1>...$, in this case the only a way to add two boxes is to add them in the interior subdiagram and we can compute by the orthogonal case, so we have $(n-2)2^{n-1}$ contributes.\\
There are other three cases to study. The first one is $b_1=2n-2$, in this case we have this type of diagram:

$$
\begin{Young}
      &&&\cr
     & \cr
     & \cr
     &\cr
\end{Young}$$

so if $b_2=2n-3$ than we have to add boxes in the interior part and by induction they are $(n-2)2^{n-1}$.\\
If $b_2=2n-4$ then we have to add the boxes to the second row and the second column, so we have $ 2^{n-2}$. 
$$
\begin{Young}
      &&&\cr
     &&&X \cr
     & \cr
     &X\cr
\end{Young}$$

Finally if $b_2=2n-5$ we have to add two boxes to the second row and we have $2^{n-2}$.\\ 
The second and the last case is $b_1=2n-3$. Now we have to add the first box to the second column
$$
\begin{Young}
      &&&\cr
     & \cr
     & \cr
     &X\cr
\end{Young}$$

and then if $b_2=2n-4$ we have to add the other box to the third column and we have $2^{n-2}$ contributes 
$$
\begin{Young}
      &&&\cr
     &&& \cr
     && \cr
     &&\cr
     &X&X\cr
\end{Young}$$

else $b_{2}=2n-5$ then we have to add the second box to the second row
$$
\begin{Young}
      &&&\cr
     &&&X \cr
     && \cr
     &\cr
     &X\cr
\end{Young}$$

and we have $ 2^{n-2}$. By adding the previous contributes we have proved the induction.

\end{proof}
\end{proposition}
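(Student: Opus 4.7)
The plan is to reduce the dimension count, via the Schur--Weyl decomposition of $\bigwedge(S^2 V)$ into the hook-nested summands $H_{\underline a}^+(V)$ and Pieri's rule for tensoring with $\bigwedge^2 V$, to a purely combinatorial problem on Young diagrams. Concretely, using
\[
\bigwedge (M_{2n}^+)^* \otimes M_{2n}^- \;\simeq\; \bigoplus_{\underline a}\; \bigoplus_{\lambda \in \{\underline a\}_2} S_\lambda(V),
\]
the preceding lemma reduces the problem to counting those $\lambda$ that are of the form $2\mu$ or $2\mu + 1^{2n}$, i.e.\ which have all columns even, possibly after first deleting a row of maximal length $2n$. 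I would then show separately that each of these two shape types contributes $n \cdot 2^n$, summing to the claimed $(2n) 2^n$.

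For the first type, I would case on $a_1$. When $a_1 \le 2n - 3$, the resulting diagram fits inside an ambient shape with strictly fewer than $2n$ rows, and the count coincides with the one that arises in the odd orthogonal analysis of \cite{Dolce}, giving $(n-1) 2^n$ diagrams. The case $a_1 = 2n-2$ is excluded: the only admissible Pieri placement would force two added boxes into the same column. When $a_1 = 2n-1$, I would further subdivide on $a_2 \in \{2n-2, 2n-3\}$; each subcase reduces to counting strictly decreasing even chains $b_1 > b_2 > \cdots$ bounded by $2n-2$ or $2n-3$, yielding $2^{n-1}$ each, hence $2^n$ in total.

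For the second type, I would proceed by induction on $n$. The cases $a_1 \in \{2n-3, 2n-2\}$ (no full column of length $2n$ is yet present) each contribute $2^{n-1}$ by direct inspection, whereas the case $a_1 = 2n-1$ reduces, after peeling off the outer full-length row and first column, to the analogous count for $n-1$ together with a few boundary subcases on $b_1$ and $b_2$; the inductive hypothesis then delivers $(n-1) 2^n$. Adding these yields $n \cdot 2^n$ for this shape type as well. The main obstacle is the intricate boundary casework at $a_1 \in \{2n-3, 2n-2, 2n-1\}$: Pieri's rule forbids the two added boxes from landing in the same column, and one must check carefully that the truncated diagram has all even columns or exactly a first row of length $2n$ followed by even columns, which is where most of the bookkeeping lies.
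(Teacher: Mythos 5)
Your proposal follows essentially the same route as the paper: the hook decomposition of $\bigwedge(S^2V)$ into the $H_{\underline a}^+(V)$, Pieri's rule for tensoring with $\bigwedge^2 V$, the split into the two shape types $2\mu$ and $2\mu+1^{2n}$, the casework on $a_1,a_2$ with the reduction to the odd orthogonal count of \cite{Dolce}, and induction for the second type. The one point to watch is your claim that $a_1=2n-2$ is \emph{excluded} from the first type: in the paper that value is absorbed into the $(n-1)2^n$ contribution coming from the odd orthogonal case (and it does contribute, e.g.\ for $n=2$ and $\underline a=(2)$ one may grow the two length-one columns to length two, obtaining all even columns), and only the extra placement of a box on the first row is discarded there.
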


\subsection{New types of invariants in $(\bigwedge (M_{2n}^+)^*\otimes M_{2n}^-)^G$}

It is a classical result that the determinant of a skew-symmetric matrix $X$ can be expressed as a square of a polynomial $Pf(X)$, the ``pfaffian'', in the entries of this matrix. Furthermore we can extend this function to a generic matrix $Y$ (with an abuse of notation still called $Pf$) by $Pf(Y):=Pf(\frac{Y-Y^t}{2})$. So we can consider the polarized pfaffian
$$Q(Y_1,...,Y_n):=Pf_L\left(\frac{Y_1-Y_1^t}{2},...,\frac{Y_n-Y_n^t}{2}\right),$$ and the element
\begin{equation}\label{LAQ}Q([X_1,X_2],...,[X_{2n-3},X_{2n-2}],X)\in \left(\bigotimes^{2n-2}M_{2n}^+\otimes M_{2n}^-\right)^*.\end{equation}
Let us remark that this element is $SO(2n)-$invariant and if $X_i=\lambda Id$ we have that it equals to zero, so, by linearity, we can reduce our computations on the spaces of traceless matrices.

Thanks to the natural isomorphism with the dual space, \eqref{LAQ} corresponds to the element 
$$\sum_{i<j} Q([X_1,X_2],...,[X_{2n-3},X_{2n-2}],e_i\wedge e_j)e_i\wedge e_j \in \left(\bigotimes^{2n-2}(M_{2n}^+)^*\otimes M_{2n}^-\right)^{SO(2n)},$$
where $e_1,...,e_{2n}$ is an orthonormal basis of $\mathbb{C}^{2n}$.\\
Let us consider $\Omega(X_1,...,X_{2n-2})\in \left(\bigwedge^{2n-2}(M_{2n}^+)^*\otimes M_{2n}^-\right)^{SO(2n)}$, defined by
$$\Omega(X_1,...,X_{2n-2}):=\sum_{i<j,\sigma\in S_{2n-2}} Q([X_{\sigma(1)},X_{\sigma(2)}],...,[X_{\sigma(2n-3)},X_{\sigma(2n-2)}],e_i\wedge e_j)e_i\wedge e_j$$
We can think $\Omega$ as an element of 
$$\mbox{hom}_{\mathfrak{so}(2n)}\left(\bigwedge^{2n-2}\frac{\mathfrak{sl}(2n)}{\mathfrak{so}(2n)},\mathfrak{so}(2n)\right).$$
Define\footnote{This is the relative differential in the space $\mbox{hom}_{\mathfrak{so}(2n)}\left(\bigwedge^*\frac{\mathfrak{sl}(2n)}{\mathfrak{so}(2n)},\mathfrak{so}(2n)\right)$ which produces an other invariant. For more detail you can see \cite{Kumar} Chapter 3.} $d\Omega(X_1,...,X_{2n-1})\in \mbox{hom}_{\mathfrak{so}(2n)}\left(\bigwedge^{2n-1}\frac{\mathfrak{sl}(2n)}{\mathfrak{so}(2n)},\mathfrak{so}(2n)\right)$  by
\begin{equation}\label{reldif}
\sum_{i<j}(-1)^{i+j+1} \Omega([X_i,X_j],X_1,...,\hat{X}_i,...,\hat{X}_j,...,X_{2n-1})+\sum_h (-1)^hX_h\cdot \Omega(X_1,...,\hat{X}_h,...,X_{2n-1}),
\end{equation}
where $X\in \mathfrak{sl}(2n)$ acts on $\mathfrak{so}(2n)$ by
$$X\cdot Y=XY+YX^t.$$
We remark that the first addend of \eqref{reldif} vanishes because the bracket between a symmetric matrix and a skew symmetric matrix is a symmetric matrix. So
$$d\Omega(X_1,...,X_{2n-1})=$$
$$\sum_h (-1)^h X_h\cdot\left(\sum_{i<j,\sigma_h\in S_{2n-2}^h} \epsilon(\sigma_h)Q\left([X_{\sigma_h(1)},X_{\sigma_h(2)}]...,\hat{X}_h,...,[X_{\sigma_h(2n-2)},X_{\sigma_h(2n-1)}],e_i\wedge e_j\right)e_i\wedge e_j\right),$$
where $S_{2n-2}^h$ is the symmetric group of $\{1,...,\hat{h},...,2n-1\}$.

Our  goal is to show that $\mbox{Tr}(\Omega\wedge d\Omega)$ is, up to a rest $R$, a non zero multiple of $ \mbox{Tr}(X^{4n-3})$. To achieve this, we will use the elementary matrices $u\otimes v$, $u,v\in V$ (let us recall that elementary matrices $u\otimes u$, with $u$ isotropic vector, generate the space of traceless symmetric matrices).  

We start by recalling two simple, well known facts. First of all given vectors $u_i,v_i\in V$, $i=1,\ldots ,2n$,
\begin{equation}\label{suivett}Q(u_1\otimes u_2,...,u_{2n-1}\otimes u_{2n})=[u_1,u_2,...,u_{2n-1},u_{2n}]\end{equation} and since  clearly
$$[u_1,u_2,...,u_{2n-1},u_{2n}][v_1,v_2,...,v_{2n-1},v_{2n}]=
det[(u_i,v_j)],$$
 we deduce
\begin{equation}\label{ILPRIMO}Q(u_1\otimes u_2,...,u_{2n-1}\otimes u_{2n})Q(v_1\otimes v_2,...,v_{2n-1}\otimes v_{2n})=det[(u_i,v_j)]=\sum_{\tau\in S_{2n}}\epsilon(\tau)\Pi_{i=1}^{2n}(u_i,v_{\tau(i)}).\end{equation}

Secondly, for $u_1,\ldots u_{4n-3}\in V$,
\begin{equation}\label{ILSECONDO}\mbox{Tr}(u_{1}\otimes u_{1}\cdot...\cdot u_{4n-3}\otimes u_{4n-3})=(u_{1},u_{2})(u_{2},u_{3})\cdots(u_{4n-3},u_{1}).\end{equation}

At this point let us write $\Omega\wedge d\Omega$ explicitly. We get
$$\Omega\wedge d\Omega(X_1,...,X_{4n-3})=$$
$$\sum_{i<j,\ l<m,\ h,\ \sigma_h,\sigma\in S_{2n-2},\ \tau\in S_{4n-3}}(-1)^{\tau(h)}\epsilon(\tau)\epsilon(\sigma)\epsilon(\sigma_h) Q([X_{\sigma(\tau(1))},X_{\sigma(\tau(2))}],...,[X_{\sigma(\tau(2n-3))},X_{\sigma(\tau(2n-2))}],e_i\wedge e_j)$$
$$Q\left([X_{\sigma_h(\tau(2n-1))},X_{\sigma_h(\tau(2n))}]...,\hat{X}_{\tau(h)},...,[X_{\sigma_h(\tau(4n-4))},X_{\sigma_h(\tau(4n-3))}],e_l\wedge e_m\right)e_i\wedge e_j(X_{\tau(h)} e_l\wedge e_m+e_l\wedge e_m X_{\tau(h)}).$$
Up to a non zero scalar, this equals 
$$\sum_{i<j,\ l<m,\ h,\ \tau\in S_{4n-3}}(-1)^{\tau(h)}\epsilon(\tau) Q\left([X_{\tau(1)},X_{\tau(2)}],...,[X_{\tau(2n-3)},X_{\tau(2n-2)}],e_i\wedge e_j\right)$$
$$Q\left([X_{\tau(2n-1)},X_{\tau(2n)}]...,\hat{X}_{\tau(h)},...,[X_{\tau(4n-4)},X_{\tau(4n-3)}],e_l\wedge e_m\right)e_i\wedge e_j(X_{\tau(h)} e_l\wedge e_m+e_l\wedge e_m X_{\tau(h)}).$$
By the  linearity of $Q$ we then deduce that this  equals
$$\sum_{i<j,\ l<m,\ h,\ \tau\in S_{4n-3}}(-1)^{\tau(h)}\epsilon(\tau) Q\left([X_{\tau(1)},X_{\tau(2)}],...,[X_{\tau(2n-3)},X_{\tau(2n-2)}],e_i\wedge e_j\right)$$
$$Q\left([X_{\tau(2n-1)},X_{\tau(2n)}]...,\hat{X}_{\tau(h)},...,[X_{\tau(4n-4)},X_{\tau(4n-3)}],(X_{\tau(h)} e_l\wedge e_m+e_l\wedge e_m X_{\tau(h)})\right)e_i\wedge e_j e_l\wedge e_m.$$

So, taking the trace, we have up to a non zero scalar,
$$\mbox{Tr}(\Omega\wedge d\Omega)=$$
$$\sum_{i<j,\ h,\ \tau\in S_{4n-3}}(-1)^{\tau(h)}\epsilon(\tau) Q\left([X_{\tau(1)},X_{\tau(2)}],...,[X_{\tau(2n-3)},X_{\tau(2n-2)}],e_i\wedge e_j\right)$$
$$Q\left([X_{\tau(2n-1)},X_{\tau(2n)}]...,\hat{X}_{\tau(h)},...,[X_{\tau(4n-4)},X_{\tau(4n-3)}],(X_{\tau(h)} e_i\wedge e_j+e_i\wedge e_j X_{\tau(h)})\right)$$
which, again up to a non zero scalar, we can rewrite  as
\begin{equation}\label{lollo}
\sum_{i,j,\ \tau\in S_{4n-3}}\epsilon(\tau) Q\left([X_{\tau(1)},X_{\tau(2)}],...,[X_{\tau(2n-3)},X_{\tau(2n-2)}],e_i\wedge e_j\right)
\end{equation}
$$
\ \ \ \ \ \ \ Q\left([X_{\tau(2n-1)},X_{\tau(2n)}]...,[X_{\tau(4n-3)},X_{\tau(4n-2)}],(X_{\tau(4n-3)} e_i\wedge e_j+e_i\wedge e_j X_{\tau(4n-3)})\right).
$$

We will complete our computation considering $X_{i}=u_{i}\otimes u_{i}$, with $u_i$ isotropic vector. First we make two preliminary remarks:
\begin{enumerate}
\item $[u\otimes u,v\otimes v]=(u,v)u\wedge v;$
\item $\begin{aligned}u\otimes u\cdot (e_i\otimes e_j-e_j\otimes e_i)&= u\otimes u(e_i\wedge e_j)+(e_i\wedge e_j)u\otimes u\\
&=(u,e_i)u\wedge e_j-(u,e_j)u\wedge e_i.\end{aligned}$
\end{enumerate}

So we can substitute \eqref{lollo} by
$$
\sum_{i,j,\ \tau\in S_{4n-3}}\epsilon(\tau) (u_{\tau(1)},u_{\tau(2)})(u_{\tau(3)},u_{\tau(4)})\cdots(u_{\tau(4n-5)},u_{\tau(4n-4)})Q(u_{\tau(1)}\wedge u_{\tau(2)},...,u_{\tau(2n-3)}\wedge u_{\tau(2n-2)},e_i\wedge e_j)$$ 

$$[(u_{\tau(4n-3)},e_i)Q(u_{\tau(2n-1)}\wedge u_{\tau(2n)},...,u_{\tau(4n-5)}\wedge u_{\tau(4n-4)},u_{\tau(4n-3)}\wedge e_j)-\ \ \ \ \ \ \ \ $$ 
$$\ \ \ \ \ \ \ \ \ \ \ \ \ \ \ \ \ \ \ (u_{\tau(4n-3)},e_j)Q(u_{\tau(2n-1)}\wedge u_{\tau(2n)},...,u_{\tau(4n-5)}\wedge u_{\tau(4n-4)},u_{\tau(4n-3)}\wedge e_i)]$$

that, up to a non zero  scalar, equals
$$
\sum_{i,j,\ \tau\in S_{4n-3}}\epsilon(\tau) (u_{\tau(1)},u_{\tau(2)})(u_{\tau(3)},u_{\tau(4)})\cdots(u_{\tau(4n-5)},u_{\tau(4n-4)})(u_{\tau(4n-3)},e_i)
$$

\begin{equation}\label{latre}
Q(u_{\tau(1)}\wedge u_{\tau(2)},...,u_{\tau(2n-3)}\wedge u_{\tau(2n-2)},e_i\wedge e_j)Q(u_{\tau(2n-1)}\wedge u_{\tau(2n)},...,u_{\tau(4n-5)}\wedge u_{\tau(4n-4)},u_{\tau(4n-3)}\wedge e_j).
\end{equation}
But thanks to \eqref{ILPRIMO} we have that a single summand of \eqref{latre}   equals

$$\epsilon(\tau) (u_{\tau(1)},u_{\tau(2)})\cdots(u_{\tau(4n-3)},e_i)\left|\begin{matrix} 
(u_{\tau(1)},u_{\tau(2n-1)}) & \cdots & (u_{\tau(1)},u_{\tau(4n-3)})& (u_{\tau(1)},e_j) \\
 \vdots&  &\vdots & \vdots\\
(u_{\tau(2n-2)},u_{\tau(2n-1)}) & \cdots &(u_{\tau(2n-2)},u_{\tau(4n-3)} & (u_{\tau(2n-2)},e_j) \\
(e_i,u_{\tau(2n-1)}) &\cdots  &(e_i,u_{\tau(4n-3)}) &(e_i,e_j) \\
(e_j,u_{\tau(2n-1)}) &\cdots  &(e_j,u_{\tau(4n-3)}) &(e_j,e_j)
\end{matrix}\right|
$$

by adding on $i$ in \eqref{latre} we have 
\begin{equation}\label{sviluppata}
\sum_{j,\tau}\epsilon(\tau) (u_{\tau(1)},u_{\tau(2)})\cdots(u_{\tau(4n-5)},u_{\tau(4n-4)})\left|\begin{matrix} 
(u_{\tau(1)},u_{\tau(2n-1)}) & \cdots & (u_{\tau(1)},u_{\tau(4n-3)})& (u_{\tau(1)},e_j) \\
 \vdots&  &\vdots & \vdots\\
(u_{\tau(2n-2)},u_{\tau(2n-1)}) & \cdots &(u_{\tau(2n-2)},u_{\tau(4n-3)} & (u_{\tau(2n-2)},e_j) \\
(u_{\tau(4n-3)},u_{\tau(2n-1)}) &\cdots  &0&(u_{\tau(4n-3)},e_j) \\
(e_j,u_{\tau(2n-1)}) &\cdots  &(e_j,u_{\tau(4n-3)}) &(e_j,e_j)
\end{matrix}\right|.\end{equation}
By the Laplace expansion for the determinant with respect to the last row, we have that the determinant in the previous formula equals to
$$\sum_{k=0}^{2n-2}(-1)^k(e_j,u_{2n-2+k})\left|\begin{matrix} 
(u_{\tau(1)},u_{\tau(2n-1)}) & \cdots &\widehat{(u_{\tau(1)},u_{\tau(2n-2+k)})}&\cdots & (u_{\tau(1)},u_{\tau(4n-3)})& (u_{\tau(1)},e_j) \\
 \vdots& &\vdots & &\vdots & \vdots\\
(u_{\tau(2n-2)},u_{\tau(2n-1)}) & \cdots &\widehat{(u_{\tau(2n-2)},u_{\tau(2n-2+k)})}&\cdots&(u_{\tau(2n-2)},u_{\tau(4n-3)}) & (u_{\tau(2n-2)},e_j) \\
(u_{\tau(4n-3)},u_{\tau(2n-1)}) &\cdots&\widehat{(u_{\tau(4n-3)},u_{\tau(2n-2+k)})}&\cdots  & 0&(u_{\tau(4n-3)},e_j) 
\end{matrix}\right|
$$
$$\ \ \ \ \ \ \ \ \ \ \ \ \ \ (-1)^{2n-1}\left|\begin{matrix} 
(u_{\tau(1)},u_{\tau(2n-1)}) & \cdots & (u_{\tau(1)},u_{\tau(4n-3)}) \\
 \vdots&  &\vdots \\
(u_{\tau(2n-2)},u_{\tau(2n-1)}) & \cdots &(u_{\tau(2n-2)},u_{\tau(4n-3)})  \\
(u_{\tau(4n-3)},u_{\tau(2n-1)}) &\cdots  & 0
\end{matrix}\right|
$$

where in the first matrix the column $(u_{\tau(r)},u_{\tau(2n-2+k)})$ is missing. By adding over $j$ and permuting the columns, all the resulting determinants equal to
$$
-\left|\begin{matrix} 
(u_{\tau(1)},u_{\tau(2n-1)}) & \cdots & (u_{\tau(1)},u_{\tau(4n-3)}) \\
 \vdots&  &\vdots \\
(u_{\tau(2n-2)},u_{\tau(2n-1)}) & \cdots &(u_{\tau(2n-2)},u_{\tau(4n-3)})  \\
(u_{\tau(4n-3)},u_{\tau(2n-1)}) &\cdots   & 0

\end{matrix}\right|,
$$
expanding with respect to the last row we have
$$
\sum_{l=0}^{2n-3}(-1)^{l+1}(u_{\tau(4n-3)},u_{\tau(2n-1+l)})\left|\begin{matrix} 
(u_{\tau(1)},u_{\tau(2n-1)}) & \cdots &\widehat{(u_{\tau(1)},u_{\tau(2n-2+l)})}&\cdots & (u_{\tau(1)},u_{\tau(4n-3)}) \\
 \vdots& & & &\vdots\\
(u_{\tau(2n-2)},u_{\tau(2n-1)}) & \cdots  &\widehat{(u_{\tau(2n-2)},u_{\tau(2n-2+l)})}&\cdots &(u_{\tau(2n-2)},u_{\tau(4n-3)})  \\
\end{matrix}\right|
$$

So \eqref{sviluppata} becomes

\begin{equation}\label{gfrt}\sum_{ l,\tau}(-1)^{l+1}\epsilon(\tau)(u_{\tau(1)},u_{\tau(2)})\cdots(u_{\tau(4n-5)},u_{\tau(4n-4)})(u_{\tau(4n-3)},u_{\tau(2n-1+l)})$$ $$ \sum_{\sigma\in S_{2n-3}^{2n-1+l}}\epsilon(\sigma) (u_{\tau(1)},u_{\sigma(\tau(2n-1))})\cdots (u_{\tau(2n-2)},u_{\sigma(\tau(4n-3))}), \end{equation}
When we express the above element as an expression in  the traces of $X^i$, the element of the form
$(u_{4n-3}, u_1)(u_{1},u_2)\cdots (u_{4n-4},u_{4n-3})$ contributes only to $Tr(X^{4n-3})$. We are going presently to show that their coefficient in the above expression is non zero.

 Notice now that   a summand in \eqref{gfrt} equals $(u_{4n-3}, u_1)(u_{1},u_2)\cdots (u_{4n-4},u_{4n-3})$ only if is of the form 
 $$[(u_{h},u_{h+1})(u_{h+2},u_{h+3})\cdots (u_{h-1},u_h)][(u_{h+1},u_{h+2})(u_{h+3},u_{h+4})\cdots (u_{h-2},u_{h-1})$$
 where we order the elements $1,\ldots ,4n-3$ cyclically modulo  $4n-3$.
 For a given $h$ there are exactly two such monomials, the first is the one write above, while the second is given applying the involution mapping $h+i$ to $h-i$ which since $4n-3$ is congruent to $1$ modulo 4 has positive sign. 
 So for each given $h$ this monomial appear either with coefficient $\pm 2$. Since the number of $h$ is odd our claim follows and we have shown

\begin{teorem}
With the previous notation, we have
\begin{equation}\label{pairingpfaffiano}
Tr(\Omega\wedge d\Omega)=q\cdot Tr(X^{4n-3})+R,\end{equation}
where $q\in\mathbb{Q}-\{0\}$ and $R$ is a homogeneous polynomial of degree $4n-3$ in the elements $Tr(X^i)$ with $i<4n-3$.
\end{teorem}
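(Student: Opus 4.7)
The plan is to compute $\mathrm{Tr}(\Omega\wedge d\Omega)$ on a generating family of traceless symmetric matrices, namely the isotropic elementary matrices $X_i=u_i\otimes u_i$, and track which monomial in the scalar products $(u_i,u_j)$ survives and corresponds to $\mathrm{Tr}(X^{4n-3})$ via the trace identity \eqref{ILSECONDO}. First, I would substitute the explicit expression for $d\Omega$ into $\Omega\wedge d\Omega$, take the matrix trace, and use the second preliminary remark to replace $X_h\cdot(e_i\wedge e_j)$ by $(u_h,e_i)\,u_h\wedge e_j-(u_h,e_j)\,u_h\wedge e_i$, then fold the result into the second factor of $Q$ by multilinearity of the polarized pfaffian. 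After this reduction one reaches expression \eqref{lollo} and its specialization to isotropic vectors, which by the first preliminary remark turns each commutator $[X_{\tau(2k-1)},X_{\tau(2k)}]$ into $(u_{\tau(2k-1)},u_{\tau(2k)})\,u_{\tau(2k-1)}\wedge u_{\tau(2k)}$.

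Next, I would apply the product rule \eqref{ILPRIMO} to the two pfaffian factors appearing in \eqref{latre}. This rewrites each summand as a product of the pairings $(u_{\tau(1)},u_{\tau(2)})\cdots(u_{\tau(4n-5)},u_{\tau(4n-4)})$ times a $(2n)\times(2n)$ determinant whose last two rows/columns involve the basis vectors $e_i,e_j$. Summing on $i$ produces a $(2n-1)\times(2n-1)$ determinant in scalar products of the $u_{\tau(k)}$'s (this is the step giving \eqref{sviluppata}), and performing Laplace expansion with respect to the last row reduces it further to $(2n-2)\times(2n-2)$ determinants multiplied by extra pairings. The outcome is the explicit scalar sum \eqref{gfrt} involving the $\epsilon(\tau)$ and signs $(-1)^{l+1}$.

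Then I would isolate the contribution that feeds $\mathrm{Tr}(X^{4n-3})$. By \eqref{ILSECONDO}, only the cyclic monomials $(u_h,u_{h+1})(u_{h+1},u_{h+2})\cdots(u_{h-1},u_h)$ (indices mod $4n-3$) give rise to $\mathrm{Tr}(X^{4n-3})$; all other products of pairings are symmetric functions in lower-order traces and end up in the remainder $R$. For fixed cyclic starting point $h$, exactly two terms of \eqref{gfrt} contribute this monomial: the "forward" term and the one obtained by the involution $h+i\mapsto h-i$. The key observation is that this involution has positive sign precisely because $4n-3\equiv 1\pmod 4$, so the two contributions add (rather than cancel), yielding coefficient $\pm 2$ per $h$.

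The main obstacle, and the place where the argument is least automatic, is the last sign bookkeeping step: one must verify that after propagating through the signs $\epsilon(\tau)$, $(-1)^{\tau(h)}$, the Laplace sign $(-1)^{l+1}$, and the permutation of columns in the determinants, the $4n-3$ contributions (one for each cyclic rotation $h$) all carry a common sign. Once this is established, since $4n-3$ is odd the total coefficient of the cyclic monomial is a nonzero rational number $q$, which proves \eqref{pairingpfaffiano} with $R$ a polynomial of degree $4n-3$ in $\mathrm{Tr}(X^i)$, $i<4n-3$.
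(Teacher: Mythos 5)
Your proposal follows the paper's own computation essentially line by line: the reduction to \eqref{lollo} via the two preliminary remarks, the passage to isotropic elementary matrices $X_i=u_i\otimes u_i$, the conversion of the product of two pfaffians into a determinant via \eqref{ILPRIMO}, the summation over $i$ and $j$ with Laplace expansion leading to \eqref{gfrt}, and the identification of the cyclic monomials $(u_h,u_{h+1})(u_{h+1},u_{h+2})\cdots(u_{h-1},u_h)$ as the only contributors to $Tr(X^{4n-3})$, with exactly two terms per cyclic starting point $h$ related by the involution $h+i\mapsto h-i$ of sign $+1$ because $4n-3\equiv 1\pmod 4$. Up to that point the two arguments coincide.

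The one place you diverge is the final step, and there your route is both harder than necessary and left open: you propose to verify that the $4n-3$ contributions (one per $h$) all carry a common sign, and you explicitly flag this bookkeeping as unresolved. The paper does not need this. Once you know that for each fixed $h$ the two matching terms add rather than cancel, the contribution of each $h$ is $\pm 2$ (the sign possibly depending on $h$), so the total coefficient is $2(k_+-k_-)$ where $k_++k_-=4n-3$. Since $4n-3$ is odd, $k_+-k_-$ is odd and in particular nonzero, regardless of how the signs are distributed among the various $h$. This parity observation is exactly what the sentence ``Since the number of $h$ is odd our claim follows'' in the paper is doing, and it lets you skip the global sign propagation entirely. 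With that replacement your argument is complete and is the paper's proof.
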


\subsection{Explicit description}
We are ready to develop the main part of the paper: the explicit description of the the space $\left(\bigwedge (M_{2n}^+)^*\otimes M_{2n}^-\right)^{SO(2n)}$.

We set $T_i:=Tr(St_{4i+1}(X_1,...,X_{4i+1}))$ and 
$$Q:=\sum_{\sigma\in S_{2n}}Q([X_{\sigma(1)},X_{\sigma(2)}],...,[X_{\sigma(2n-1)},X_{\sigma(2n)}]).$$ Let us start with a technical lemma:
\begin{lemma}
\begin{equation}\label{QQ}
Q\wedge Q=0.\end{equation}
\end{lemma}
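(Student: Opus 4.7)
The plan is to verify $Q\wedge Q=0$ by evaluation on rank-one symmetric matrices $X_i=u_i\otimes u_i$ with $u_i\in V$ isotropic. As recalled just before \eqref{suivett}, such matrices span the traceless symmetric matrices, and tracefull inputs $\lambda\, \text{Id}$ contribute nothing to $Q$ since $[\lambda\,\text{Id},X]=0$; so, by multilinearity, this locus is enough.

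Using remark~(1) together with \eqref{suivett}, each commutator factors as $[X_{\sigma(2i-1)},X_{\sigma(2i)}]=(u_{\sigma(2i-1)},u_{\sigma(2i)})\,(u_{\sigma(2i-1)}\wedge u_{\sigma(2i)})$, and the polarized pfaffian of the $n$ commutators becomes, up to a fixed nonzero scalar,
$$\prod_{i=1}^{n}(u_{\sigma(2i-1)},u_{\sigma(2i)})\,\cdot\,[u_{\sigma(1)},\ldots,u_{\sigma(2n)}].$$
Pulling out the determinant via $[u_{\sigma(1)},\ldots,u_{\sigma(2n)}]=\epsilon(\sigma)\,[u_1,\ldots,u_{2n}]$ and summing over $\sigma$ with the antisymmetrization sign needed for $Q$ to live in the exterior algebra gives
$$Q(u_1\otimes u_1,\ldots,u_{2n}\otimes u_{2n})=c\cdot [u_1,\ldots,u_{2n}]\cdot H(u_1,\ldots,u_{2n}),$$
where $c\neq 0$ and $H(u):=\sum_{\sigma\in S_{2n}}\prod_{i=1}^n(u_{\sigma(2i-1)},u_{\sigma(2i)})$ is a hafnian-type symmetric function. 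Substituting into the expansion $(Q\wedge Q)(X_1,\ldots,X_{4n})=\sum_{|S|=2n}\epsilon(S)\,Q(X_S)\,Q(X_{S^c})$, the lemma is reduced to the identity
$$\sum_{\substack{S\subset\{1,\ldots,4n\}\\ |S|=2n}}\epsilon(S)\,[u_S][u_{S^c}]\,H(u_S)H(u_{S^c})=0.$$

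The basic vanishing $\sum_S\epsilon(S)[u_S][u_{S^c}]=0$ comes from the Laplace expansion of $\det\bigl(\begin{smallmatrix}U\\ U\end{smallmatrix}\bigr)=0$ for the $2n\times 4n$ matrix $U=[u_1\mid\cdots\mid u_{4n}]$ of rank at most $2n$. To absorb the hafnian factors, I would expand $H(u_S)$ and $H(u_{S^c})$ as sums over pair-matchings of their indices, reindex the resulting triple sum by a full matching $M$ of $\{1,\ldots,4n\}$ together with a two-coloring of its $2n$ pairs into two halves of size $n$, and observe that the inner sum over those $S$ compatible with $(M,\text{coloring})$ is again a Pl\"ucker--Laplace cancellation applied to a derived configuration.

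The main obstacle will be the sign bookkeeping in this reindexing: the signs from $\epsilon(S)$, from the antisymmetrization in the definition of $Q$, and from the expansion of each hafnian into matchings must assemble precisely into the alternating sign pattern required for the Pl\"ucker--Laplace vanishing to apply term by term. A more conceptual alternative exploits that $Q$ (and hence $Q\wedge Q$) is in fact $SL(V)$-invariant, since the volume form $\omega$ is $SL$-invariant: by the first fundamental theorem of classical invariant theory, $(Q\wedge Q)|_{X_i=u_i\otimes u_i}$ is then a polynomial in the $2n\times 2n$ determinants of the $u_i$, and the combined antisymmetry and bidegree constraints on $4n$ vectors in a $2n$-dimensional space should leave no room for a nonzero such invariant, forcing $Q\wedge Q=0$.
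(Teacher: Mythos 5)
Your reduction to $X_i=u_i\otimes u_i$ with $u_i$ isotropic, and the factorization of $Q$ there as a nonzero multiple of $[u_1,\ldots,u_{2n}]$ times the hafnian-type sum $H(u)$, are correct and coincide with the paper's starting point. The gap is in the cancellation step. You propose to fix a perfect matching $M$ of $\{1,\ldots,4n\}$ arising from the two hafnians and to kill the inner sum $\sum_{S}\epsilon(S)[u_S][u_{S^c}]$, taken over the $\binom{2n}{n}$ subsets $S$ that are unions of $n$ pairs of $M$, by a Pl\"ucker--Laplace relation. That inner sum does not vanish: already for $n=1$ and $M=\{\{1,2\},\{3,4\}\}$ the only compatible subsets are $S=\{1,2\}$ and $S=\{3,4\}$, both with $\epsilon(S)=+1$, so the inner sum is $2[u_1,u_2][u_3,u_4]\neq 0$. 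The Laplace identity $\sum_S\epsilon(S)[u_S][u_{S^c}]=0$ needs all $\binom{4n}{2n}$ subsets, and restricting it to those adapted to a fixed $M$ destroys it; the true cancellation mixes different matchings. The way to see it (and this is the paper's proof) is to re-expand each product $[u_S][u_{S^c}]$ as $\det[(u_i,u_j)]_{i\in S,\,j\in S^c}$ via \eqref{ILPRIMO}. Every monomial of the resulting expansion is then a product of closed cycles of scalar products, each cycle alternating between ``matching'' edges and ``determinant'' edges and hence of even length; by \eqref{ILSECONDO} such a cycle is $\mathrm{Tr}(X_{i_1}\cdots X_{i_{2k}})$, so each monomial is a product of traces of even-degree monomials. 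Antisymmetrizing yields combinations of products of $\mathrm{Tr}(St_{2k})$'s, and each such factor vanishes because the cyclic shift of a $2k$-cycle is an odd permutation preserving the trace. This is the missing idea in your argument.

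Your conceptual fallback also fails at its first step: $Q$ is not $SL(V)$-invariant. The Pfaffian satisfies $Pf(gYg^t)=\det(g)\,Pf(Y)$, i.e.\ it is equivariant for the \emph{congruence} action, whereas the action throughout the paper is \emph{conjugation} $Y\mapsto gYg^{-1}$; the two agree only on the orthogonal group. Indeed your own formula $Q|_{X_i=u_i\otimes u_i}=c\,[u_1,\ldots,u_{2n}]\,H(u)$ already exhibits the scalar products $(u_i,u_j)$, which are $O(V)$- but not $SL(V)$-invariant, so the first fundamental theorem for $SL(V)$ does not apply and there is no ``determinants only'' expression whose bidegree you could contradict.
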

\begin{proof}
We can prove our lemma by considering $X_i=u_i\otimes u_i$ as in the previous section.\\
We have that $Q\wedge Q$ is the antisymmetrization of the element: 
\begin{equation}\label{ELEMENTO}
(u_1,u_2)\cdots (u_{4n-1},u_{4n})det(u_i,u_{2n+j}),\end{equation}
but we can observe that, up to a sign, \eqref{ELEMENTO} is a sum of elements of type: ''products of cycles of even length'', where a cycle is an element of type $(u_{i_1},u_{i_2})(u_{i_2},u_{i_3})\cdots (u_{i_k},u_{i_1})$ and the length is the number of scalar products. So by the formula \eqref{ILSECONDO} each summand corresponds to the product of traces of an even monomial and when we antisymmetrize we obtain zero.
\end{proof}

We can deduce a well known result (\cite{Bo2},\cite{Tak}).
\begin{proposition}
The space $\left(\bigwedge (M_{2n}^+)^*\right)^G$ is the symmetric product of the exterior algebra in the elements $T_i$ (where $h=0,...,n-1$) and the one dimensional algebra generated by the element $Q$.
\end{proposition}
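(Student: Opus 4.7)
My plan is to prove this by a dimension count, combining the preceding lemma (which gives $Q\wedge Q=0$) with Lemma \ref{evendim}. Since each $T_i$ has odd degree $4i+1$, graded commutativity in $\bigwedge(M_{2n}^+)^*$ forces $T_i\wedge T_i=0$ and $T_i\wedge T_j=-T_j\wedge T_i$, so the subalgebra they generate is a quotient of the exterior algebra on $T_0,\ldots,T_{n-1}$, hence of dimension at most $2^n$. The element $Q$ has even degree $2n$ and satisfies $Q\wedge Q=0$, and being of even degree it commutes in the graded sense with each $T_i$. Consequently the subalgebra $A'\subseteq A$ generated by $T_0,\ldots,T_{n-1},Q$ is a quotient of the tensor product algebra
\[
\bigl(\text{exterior algebra on } T_0,\ldots,T_{n-1}\bigr)\,\otimes\,\mathbb{C}[Q]/(Q^2),
\]
whose dimension equals $2\cdot 2^n=2^{n+1}$. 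Thus $\dim A'\leq 2^{n+1}$.

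Next I would establish that $A'=A$, i.e.\ that these elements actually generate the whole invariant algebra. A self-contained route uses Theorem \ref{pairingpfaffiano}: the $Q$-related pairing $Tr(\Omega\wedge d\Omega)$ produces $Tr(X^{4n-3})$ (which, via antisymmetrization, is essentially the top generator $T_{n-1}$) modulo lower-degree trace terms, so inductively every trace polynomial $Tr(X^{4i+1})$, hence every $T_i$, is recoverable from $Q$-type constructions and lower invariants. Combined with polarization this yields that all classical generators of $A$ lie in $A'$. Alternatively, the generation claim can be invoked directly from the classical structure results \cite{Bo2,Tak}.

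Finally, combining $\dim A'\leq 2^{n+1}$, the equality $A'=A$, and Lemma \ref{evendim} which gives $\dim A=2^{n+1}$, the natural surjection from the tensor product above onto $A$ is forced to be an isomorphism. This is precisely the structure asserted in the proposition, and as a byproduct one recovers the algebraic independence (in the exterior-algebra sense) of $T_0,\ldots,T_{n-1}$ and the fact that $Q$ is not contained in the subalgebra they generate.

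The main obstacle is the generation step $A'=A$: the relations $Q\wedge Q=0$ and the oddness of the $T_i$ give the upper bound on $\dim A'$ essentially for free, but one must still ensure that no further invariants of $A$ are missing. Once generation is in hand, Lemma \ref{evendim} upgrades the inequality $\dim A'\leq 2^{n+1}$ to an equality and the proof closes.
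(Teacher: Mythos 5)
Your proposal is correct and follows essentially the same route as the paper: an upper bound $2^{n+1}$ on the subalgebra generated by $T_0,\ldots,T_{n-1},Q$ coming from the odd degrees of the $T_i$ and from $Q\wedge Q=0$, the generation statement from the classical description of multilinear orthogonal invariants by traces and polarized pfaffians, and the equality $\dim A=2^{n+1}$ from Lemma \ref{evendim}; the only cosmetic difference is that the paper establishes injectivity by exhibiting the nonvanishing top-degree product $T_0\wedge\cdots\wedge T_{n-1}\wedge Q$ via the diagram description, whereas you get it for free from the dimension match with the free graded-commutative algebra. One caveat: your ``self-contained'' route to generation via Theorem \ref{pairingpfaffiano} does not work as stated, since $\Omega$ and $d\Omega$ live in the covariant module $B^+$ rather than in $A$ and the identity for $Tr(\Omega\wedge d\Omega)$ expresses $Tr(X^{4n-3})$ in terms of them rather than showing that every invariant is a polynomial in the $T_i$ and $Q$; you should rely on the classical first fundamental theorem (as the paper does, together with $Tr(St_i)=0$ for $i\not\equiv 1 \bmod 4$ and Amitsur--Levitzki) for that step.
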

\begin{proof}
By the Lemma \ref{evendim} we have that the dimension of this space is $2^{n+1}$. So the only thing that we have to prove is the linear independence.\\
We know that the multilinear invariant functions are generated by traces of monomials and the polarized pfaffian of $2n$ monomials of matrices. Further we recall, by \cite{Dolce}, that $Tr(St_i)=0$ if $i\neq 4h+1$. If we consider the element:
\begin{equation}\label{ilprodotto}
T_0\wedge T_1\wedge \cdots \wedge T_{n-1} \wedge Q
\end{equation}
we see that this element has maximum degree and it is the only way to form an element of such degree (we recall that by the Amitsur-Leviski's Theorem $Tr(St_h)=0$, $h\ge 4n$).\\
Further we know that in degree maximum there is an invariant (thanks to the description by diagrams), so \eqref{ilprodotto} is non zero and we have the thesis since $T_i\wedge T_i=Q\wedge Q=0.$
\end{proof}

We denote by $A_n\subset \left(\bigwedge (M_{2n}^+)^*\right)^G$ the subalgebra generated by $T_0,...,T_{n-2},Q$. Then we have:
\begin{teorem}\label{CASOPARI}
The space $\left(\bigwedge (M_{2n}^+)^*\otimes M_{2n}^-\right)^G$ is a free module on the subalgebra $A_{n}\subset \left(\bigwedge (M_{2n}^+)^*\right)^G$ with basis $X^2,X^3,...,X^{4h+2},X^{4h+3},...,X^{4n-6},X^{4n-5},\Omega,d\Omega$. 
\end{teorem}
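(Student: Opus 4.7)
The plan is to match dimensions and then prove $A_n$-linear independence of the proposed $2n$ generators. Since $A_n$ is the tensor product of the exterior algebra on the anticommuting, square-zero elements $T_0,\ldots,T_{n-2}$ with the two-dimensional algebra $\{1,Q\}$ (as $Q\wedge Q=0$), its dimension is $2^n$. By Proposition~\ref{dimpari}, a free $A_n$-module of rank $2n$ has the correct total dimension $(2n)2^n=\dim B^+$, so freeness is equivalent to $A_n$-linear independence. Each $X^{4h+2}$ and $X^{4h+3}$ lies in $B^+$ because the transpose identity $(St_k)^T=(-1)^{\binom{k}{2}}St_k$ forces $St_k$ to be matrix-skew-symmetric precisely when $k\equiv 2,3\pmod 4$; $\Omega$ and $d\Omega$ belong to $B^+$ by construction.

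For the linear independence, introduce the $A_n$-bilinear pairing
\[
\langle f,g\rangle:=\operatorname{Tr}(f\wedge g)\in A.
\]
Combining $St_i\wedge St_j=St_{i+j}$ with $\operatorname{Tr}(St_k)=T_{(k-1)/4}$ when $k\equiv 1\pmod 4$ (zero otherwise, with $T_m=0$ for $m\ge n$ by Amitsur--Levitzki), the only nonvanishing $X$--$X$ pairings are $\langle X^{4h+2},X^{4h'+3}\rangle=T_{h+h'+1}$ for $h+h'\le n-2$, and the extremal case $h+h'=n-2$ picks out the missing generator $T_{n-1}\notin A_n$. The identity~\eqref{pairingpfaffiano} supplies the complementary input: $\langle\Omega,d\Omega\rangle=qT_{n-1}+R$ with $q\neq 0$ and $R\in A_n$.

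Split $A=A_n\oplus T_{n-1}A_n$ and write correspondingly $\langle b_i,b_j\rangle=M^{(0)}_{ij}+T_{n-1}M^{(1)}_{ij}$ with $M^{(\bullet)}_{ij}\in A_n$. Given a hypothetical relation $\sum_i a_i b_i=0$ with $a_i\in A_n$, pairing with each $b_j$ and extracting the $T_{n-1}A_n$-component produces the linear system $\sum_i a_iM^{(1)}_{ij}=0$ in $A_n$. Ordering the basis as $(X^{4h+2})_{h=0}^{n-2},(X^{4h+3})_{h=0}^{n-2},\Omega,d\Omega$, the leading (scalar-valued) term of $M^{(1)}$ is block-diagonal, with first block the $(n-1)\times(n-1)$ reverse-identity matrix (from the antidiagonal $T_{n-1}$-contributions in the $X$--$X$ pairings) and second block the $2\times 2$ antidiagonal matrix with entries $\pm q$. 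Both blocks are scalar-invertible, so $M^{(1)}$ is invertible over $A_n$ up to lower-order corrections; a descending induction on the natural $T$-degree filtration of $A_n$ then forces each $a_i=0$.

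The main obstacle is controlling the various corrections: the remainder $R$ in~\eqref{pairingpfaffiano}, the lower-order entries $T_{h+h'+1}$ with $h+h'<n-2$, and any mixed pairings $\langle\Omega,X^k\rangle$, $\langle d\Omega,X^k\rangle$ that could by degree contribute to $M^{(1)}$. All such contributions lie in strictly deeper strata of the filtration of $A_n$ and do not interfere with the invertibility of the leading scalar block, but their precise bookkeeping is where most of the technical work resides.
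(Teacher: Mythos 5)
Your proposal is correct and follows essentially the same route as the paper: the dimension count via Proposition~\ref{dimpari} reduces everything to $A_n$-linear independence, which both you and the author establish by pairing with complementary generators under $\mathrm{Tr}(\cdot\wedge\cdot)$ and extracting the $T_{n-1}$-component of $A=A_n\oplus T_{n-1}\wedge A_n$, the key inputs being $\mathrm{Tr}(X^{4h+2}\wedge X^{4h'+3})=T_{h+h'+1}$ and the identity~\eqref{pairingpfaffiano}. The ``bookkeeping'' you defer at the end is exactly what the paper's proof supplies: the mixed traces $\mathrm{Tr}(\Omega\wedge X^{k})$ and $\mathrm{Tr}(d\Omega\wedge X^{k})$ are semiinvariants (hence divisible by $Q$) and so cannot contribute a $T_{n-1}$-component, while the diagonal terms $\mathrm{Tr}(\Omega\wedge\Omega)$ and $\mathrm{Tr}(d\Omega\wedge d\Omega)$ are excluded by degree together with the reduction to traceless matrices (the paper's footnote).
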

\begin{proof}
By the Proposition \ref{dimpari}, the only thing that we have to prove is the linearly independence of the generators on the algebra $A_n$. So we suppose that
\begin{equation}\label{relation}
0=\sum_{i=0}^{n-2}\left(P_i\wedge X^{4i+2}+Q_{i+1}\wedge X^{4i+3}\right)+A\wedge\Omega+B\wedge d\Omega,\end{equation}
where $P_i,Q_{i+1},A,B\in A_n$. We can see \eqref{relation} as a relation in the space $\left(\bigwedge (M_{2n}^+)^*\otimes M_{2n}\right)^G$, so we can multiply by $X^i$ for all $i\ge0$. Further we can suppose this element is homogeneous and let $j$ be the minimum among all the degrees of the generators which appear with nonzero  coefficient. If this $j$ corresponds to an element of type $X^{j}$ then by multiplying by $X^{4n-3-j}$ and taking the trace we have:
$$0=C\wedge T_{n-1}+A\wedge Tr(\Omega \wedge X^{4n-3-j})+B\wedge Tr(d\Omega\wedge X^{4n-3-j}),$$
but 
$$Tr(\Omega \wedge X^{4n-3-j})$$
and
$$Tr(d\Omega\wedge X^{4n-3-j}) $$
are semiinvariants for the orthogonal group so they can't be multiples of $T_{n-1}$, then thanks to the linear independence showed in the previous proposition we have that the only possibility is $C=0$.\\
If $j$ corresponds to $\Omega$ (or to $d\Omega$) then multiplying by $d\Omega$ (resp. by $\Omega$) and taking the trace thanks to the identity \eqref{pairingpfaffiano} we have the same situation as before\footnote{Let us remark that the degree of $\Omega\wedge \Omega$ is $4n-4$, so its trace can't involve $T_{n-1}$. On the other hand, even if $d\Omega\wedge d\Omega$ has degree $4n-2$, its trace can't involve $T_{n-1}$, otherwise it would have a summand of the form $T_{n-1}\wedge T_0$ and this is impossible because we have seen that we can valuate $d\Omega$ on traceless matrices ($T_0=0$).} and so the thesis.
\end{proof}

\subsection{Identities for missing generators}
At this point we are motivated to find out other identities for those elements as 
$X^{4n-2},X^{4n-1},T_{n-1}\wedge X^i$, which don't appear among the generators. Let us recall that by classical theory (\cite{WeRa}) $X^{4n-2}$ and $X^{4n-1}$ are not zero.

\begin{proposition}
\begin{equation}\label{pera}
X^{4n-2}=q_1\cdot Q\wedge \Omega,
\end{equation}
\begin{equation}\label{mela}
X^{4n-1}=q_2\cdot Q\wedge d\Omega,
\end{equation}
where $q_1,q_2$ are non zero scalar.
\end{proposition}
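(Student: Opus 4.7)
The plan is to apply Theorem \ref{CASOPARI} to each of $X^{4n-2}$ and $X^{4n-1}$, and then pin down the coefficients in the resulting expansion by combining (i) an $O(2n)$-parity (``Pfaffian'') argument, (ii) an elementary degree count, and (iii) the trace-pairing technique already used in the proof of Theorem \ref{CASOPARI}. The main obstacle will be eliminating the coefficients of the $X^a$ generators; this is where the trace-pairing idea must be redeployed.

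By Theorem \ref{CASOPARI}, $X^{4n-2}$ admits a unique $A_n$-linear expansion
$$X^{4n-2}=\sum_{h=0}^{n-2}\bigl(P_{4h+2}\wedge X^{4h+2}+P_{4h+3}\wedge X^{4h+3}\bigr)+A\wedge\Omega+B\wedge d\Omega,\qquad P_{\ast},A,B\in A_n.$$
The generators $X^a$ and the $T_i$ are $O(2n)$-invariant, whereas $Q$, $\Omega$, and $d\Omega$ are $O(2n)$-semi-invariant, since each is built from the polarized Pfaffian (which transforms by $\det(g)$ under $g\in O(2n)$). Write $A_n=A_n^O\oplus Q\wedge A_n^O$, where $A_n^O$ is the exterior algebra on $T_0,\ldots,T_{n-2}$. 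The $O(2n)$-invariance of $X^{4n-2}$ forces $P_{\ast}\in A_n^O$ and $A=Q\wedge A'$, $B=Q\wedge B'$ with $A',B'\in A_n^O$. Comparing degrees then gives $\deg A'=(4n-2)-(2n-2)-2n=0$, hence $A'=q_1\in\mathbb{C}$, while $\deg B'=-1$ forces $B'=0$.

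It remains to show $P_a=0$ for every $a$. Assume otherwise and let $j$ be the smallest index with $P_a\neq 0$. Wedge the expansion with $X^{4n-3-j}\in B^+$ and take the matrix trace. The left-hand side becomes $\Tr(St_{8n-5-j})=0$ by Amitsur--Levitzki. On the right-hand side, the $a=j$ summand yields $P_j\wedge T_{n-1}$ (using $\Tr(St_{4n-3})=T_{n-1}$); summands with $a<j$ vanish by the choice of $j$; and for $a>j$ the trace $\Tr(St_{a+4n-3-j})$ vanishes, either by parity (if $a+4n-3-j\not\equiv 1\pmod 4$) or because it equals $T_i$ with $i\geq n$, which is zero by Amitsur--Levitzki. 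Finally, $q_1 Q\wedge\Omega\wedge X^{4n-3-j}$ becomes, after trace, $q_1 Q\wedge\Tr(\Omega\wedge X^{4n-3-j})$; the inner trace is $O(2n)$-semi-invariant, hence lies in $Q\wedge A^O$, so the whole term vanishes via $Q\wedge Q=0$. Comparing gives $P_j\wedge T_{n-1}=0$ inside $A$; since $T_{n-1}$ is an exterior generator of $A$ not occurring in $P_j$, this forces $P_j=0$, contradicting the choice of $j$. Thus $X^{4n-2}=q_1 Q\wedge\Omega$, and $q_1\neq 0$ because $X^{4n-2}\neq 0$ (classical, \cite{WeRa}) while $Q\wedge\Omega\neq 0$ by the freeness of $B^+$.

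For $X^{4n-1}$ the same argument applies with minor adjustments: now $\deg A'=1$, and the degree-$1$ part of $A_n^O$ is spanned by $T_0$, which vanishes on traceless symmetric matrices, so $A=0$; on the other hand $\deg B'=0$, giving $B=q_2 Q$. The trace-pairing step goes through verbatim and kills every $P_a$, yielding $X^{4n-1}=q_2 Q\wedge d\Omega$ with $q_2\neq 0$ by the same non-vanishing argument.
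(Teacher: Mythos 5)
Your proof of \eqref{pera} is correct and follows the same route as the paper: expand $X^{4n-2}$ via Theorem \ref{CASOPARI}, pin down the $\Omega$- and $d\Omega$-coefficients, then kill the $P_a$ by wedging with $X^{4n-3-j}$ and tracing. In fact you supply two steps the paper leaves implicit: the $O(2n)$-parity argument forcing the coefficients of $\Omega$ and $d\Omega$ to be divisible by $Q$, and the minimal-index device needed to dispose of the cross terms $P_k\wedge \mathrm{Tr}(X^{k+4n-3-j})$ with $k<j$ (the paper's one-line ``multiplying by $X^{4n-3-i}$ and taking the trace we obtain $P_i=0$'' is too quick without it). Your observation that $Q\wedge \mathrm{Tr}(\Omega\wedge X^{4n-3-j})=0$ because the inner trace is an $O(2n)$-semi-invariant and hence divisible by $Q$ is also cleaner than the paper's phrasing.

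There is, however, a genuine gap in your treatment of \eqref{mela}. For $X^{4n-1}$ the coefficient of $\Omega$ is $A=Q\wedge A'$ with $\deg A'=1$, i.e.\ $A=c\,Q\wedge T_0$, and you dismiss it on the grounds that ``$T_0$ vanishes on traceless symmetric matrices.'' But Theorem \ref{CASOPARI} and the present proposition are statements in $B^+=\left(\bigwedge(M_{2n}^+)^*\otimes M_{2n}^-\right)^G$ over the \emph{full} space of symmetric matrices, where $T_0=\mathrm{Tr}\neq 0$; by the freeness asserted in Theorem \ref{CASOPARI}, $Q\wedge T_0\wedge\Omega$ is a nonzero element of $B^+$ of degree $4n-1$ and is therefore a legitimate candidate summand. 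Restricting to traceless matrices only proves the identity after restriction and gives no information on the $T_0$-component (the paper's ``the proof is similar'' glosses over the same point, so you are in good company, but the step still needs an argument). One way to close the gap is to use \eqref{pera} itself: $X^{4n-1}=X^{4n-2}\wedge X=q_1\,Q\wedge\Omega\wedge X$, and a direct computation shows that the skew-symmetric matrix part of $\Omega\wedge X$ equals $-\tfrac12\,d\Omega$ on the nose, while $Q\wedge(\Omega\wedge X)^+$ must vanish because $X^{4n-1}$ is skew-valued; this yields \eqref{mela} with $q_2=-q_1/2$ and no $T_0$-term. Alternatively, evaluate both sides with one argument equal to the identity matrix and compare the resulting multiples of $Q\wedge\Omega$.
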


\begin{proof}
Let us prove \eqref{pera}. By the Theorem \ref{CASOPARI}, we have $X^{4n-2}=q_1\cdot Q\wedge \Omega+R_1$, with $R_1$ of type $\sum P_i\wedge X^i$, $i<4n-2$ and $P_i$ of type ''trace''. Now we want to prove $R_1=0$. 
In fact, if we have:
$$ X^{4n-2}=q_1 Q\wedge \Omega+ \sum P_i\wedge X^i,$$
multiplying by $X^{4n-3-i}$ and taking the trace we obtain $P_i=0$. So $q_1$ must be non zero.\\ 
The prove of \eqref{mela} is similar.
\end{proof}
We still need another identity for the element $T_{n-1}\wedge X^2$. First of all let us prove two preliminary results:
\begin{lemma}
\begin{equation}\label{duale1}
Tr(\Omega\wedge X^2)=-Q,
\end{equation}
\begin{equation}\label{duale2}
Tr(d\Omega \wedge X^2)=0. 
\end{equation}
\end{lemma}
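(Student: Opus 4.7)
For the first identity \eqref{duale1}, the approach is direct expansion of $\Omega\wedge X^2$. Using the trace pairing
\[
Tr\bigl((e_i\wedge e_j)\,W\bigr) = -2\,W_{ij}
\]
for any skew-symmetric matrix $W$ (applied with $W = [Y_{2n-1},Y_{2n}]$) together with the multilinearity of $Q$ in its last slot, one finds that $Tr\bigl(\Omega(Y_1,\ldots,Y_{2n-2})\cdot [Y_{2n-1},Y_{2n}]\bigr)$ equals $-2(2n-2)!\,Q\bigl([Y_1,Y_2],\ldots,[Y_{2n-3},Y_{2n-2}],[Y_{2n-1},Y_{2n}]\bigr)$. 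The $(2n-2)!$ arises because each internal $\sigma\in S_{2n-2}$ in the definition of $\Omega$ contributes the same value after reconciling $\epsilon(\sigma)$ with the antisymmetry of commutators and the symmetry of $Q$. Antisymmetrizing over the outer shuffles of $(Y_1,\ldots,Y_{2n})$ and dividing by the wedge normalization $(2n-2)!\,2!$, the combinatorial prefactors cancel and the signed sum reproduces the defining expression of $Q$, giving $Tr(\Omega\wedge X^2) = -Q$.

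For \eqref{duale2} my plan is a two-step reduction. Starting from $d\Omega = \sum_h(-1)^h X_h\cdot\Omega_{\hat h}$ (the Koszul-type summand vanishing as noted) and cyclicity of the trace,
\[
Tr\bigl((X_h\Omega_{\hat h}+\Omega_{\hat h}X_h)[Y,Z]\bigr) = Tr\bigl(\Omega_{\hat h}\cdot\{X_h,[Y,Z]\}\bigr).
\]
For a fixed $(2n-2)$-subset $K$ of positions feeding $\Omega$, the three choices for which element of the complementary triple plays the role of $X_h$ (with the other two forming $\{Y,Z\}$), weighted by the combined shuffle and $(-1)^h$ signs, collapse via the Jacobi-type identity
\[
\{A,[B,C\,]\}+\{B,[C,A]\}+\{C,[A,B]\} = 2\,St_3(A,B,C)
\]
into a single $St_3$ applied to the complementary triple. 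Up to a nonzero rational constant this shows $Tr(d\Omega\wedge X^2) = Tr(\Omega\wedge X^3)$.

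The second step, proving $Tr(\Omega\wedge X^3) = 0$, is the crux. I plan to mimic the proof of \eqref{QQ}: specialize $X_i = u_i\otimes u_i$ for isotropic $u_i$, and use $[X_i,X_j] = (u_i,u_j)(u_i\otimes u_j - u_j\otimes u_i)$ together with \eqref{suivett} and \eqref{ILSECONDO} to rewrite $Tr(\Omega\wedge X^3)$ as a signed sum of monomials in the scalar products $(u_i,u_j)$. After the outer antisymmetrization in the $Y_i$ each such monomial equals a product $\prod_\alpha Tr(St_{k_\alpha})$ for a partition $\{k_\alpha\}$ of $2n+1$. By the recalled identity $Tr(St_i) = 0$ for $i\neq 4h+1$, only partitions into parts of the form $4h+1$ survive, and those involving a part $1$ vanish on traceless matrices (where $d\Omega$ is defined, since $T_0 = 0$ there). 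The main obstacle is to verify that the remaining partitions (possible only when $n$ is even, notably $\{2n+1\}$ when $2n+1 = 4h+1$) also contribute zero coefficient; I expect this to follow from a finer combinatorial analysis of the interplay between the polarized Pfaffian factors in $\Omega$ and the three-cycle contributed by $X^3$, mirroring the cancellation mechanism that proves $Q\wedge Q = 0$.
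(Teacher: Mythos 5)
Your treatment of \eqref{duale1} is sound and is essentially the paper's computation in a slightly cleaner form: you pair $Q(\ldots,e_i\wedge e_j)\,e_i\wedge e_j$ against the skew-symmetric matrix produced by $X^2$ and recover $Q$ in the last slot by multilinearity, whereas the paper specializes to $X_i=u_i\otimes u_i$ and absorbs the factors $(e_j,u_{2n-1})(e_i,u_{2n})$ into the determinant before summing over $i,j$. Up to checking the normalization and sign, that half is fine.

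For \eqref{duale2} there is a genuine gap. The reduction of $Tr(d\Omega\wedge X^2)$ to (a nonzero multiple of) $Tr(\Omega\wedge X^3)$ via cyclicity of the trace and the identity $\{A,[B,C]\}+\{B,[C,A]\}+\{C,[A,B]\}=2\,St_3(A,B,C)$ is plausible (the identity itself is correct), but the step you yourself call the crux, namely $Tr(\Omega\wedge X^3)=0$, is not proved: you only \emph{expect} the surviving terms to cancel after ``a finer combinatorial analysis.'' That is exactly where the content of the lemma lies. Moreover, your combinatorial setup is off in two ways. First, the specialization of $\Omega$ at $X_i=u_i\otimes u_i$ always carries a determinant factor $[u_{i_1},\ldots,e_i,e_j]$ coming from the Pfaffian, so the antisymmetrized monomials are not pure products of traces but products of the form $Q\wedge(\hbox{traces})$; counting partitions of $2n+1$ into parts $4h+1$ is therefore not the right bookkeeping. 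Second, even within that bookkeeping, partitions with all parts $\equiv 1 \bmod 4$ and no part equal to $1$ exist also for odd $n$ (e.g.\ $15=5+5+5$), so the case analysis ``possible only when $n$ is even'' is wrong. The paper's argument short-circuits all of this with one structural observation you are missing: $Tr(d\Omega\wedge X^2)$ is a \emph{semiinvariant} of degree $2n+1$ for the full orthogonal group, so if nonzero it must be a multiple of $Q\wedge T_0$ (the factor $Q$ eats degree $2n$, leaving degree $1$, and $T_0$ is the only degree-one generator). The proof then reduces to checking that no factor of the form $(u,u)$ --- i.e.\ no $Tr(X)$ --- can appear in the explicit expansion with $X_i=u_i\otimes u_i$, which is a two-line verification. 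Without either this semiinvariance argument or a completed cancellation analysis, your proof of \eqref{duale2} is incomplete.
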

\begin{proof}
We will prove that 
\begin{equation}\label{duale3}
Tr\left(\sum_{i,j}Q([X_1,X_2],...,[X_{2n-3},X_{2n-2}],e_i\wedge e_j)e_i\wedge e_j\cdot X_{2n-1}X_{2n-2}\right)=2Q,\end{equation} and \eqref{duale1} will follow.\\
In fact if we consider $X_{i}=u_{i}\otimes u_{i}$ we have that \eqref{duale3} is
$$2\sum_{i,j} (u_1,u_2)\cdots (u_{2n-1},u_{2n})[u_1,...,u_{2n-2},e_i,e_j](e_j,u_{2n-1})(e_i,u_{2n}),$$
so we can include $(e_j,u_{2n-1})(e_i,u_{2n})$ in the determinant and summing over $i,j$ we have the thesis.\\
To prove the second formula we can remark that, since the degree of $d\Omega\wedge X^2$ is $2n+1$ and it is a semiinvariant, if $Tr(d\Omega\wedge X^2)\neq 0$ it must be $kQ\wedge T_0$, with $k$ a non zero scalar. We know that, up to a scalar, we can write $d\Omega$ as the antisymmetrization of the element:
$$\sum_{i,j}Q([X_1,X_2],...,[X_{2n-3},X_{2n-2}],X_{2n-1}\cdot e_i\wedge e_j)e_i\wedge e_j,$$
so if we consider 
$$\sum_{i,j}Q\left([X_1,X_2,...,[X_{2n-3},X_{2n-2}],X_{2n-1}e_i\wedge e_j+e_i\wedge e_j X_{2n-1}\right)e_i\wedge e_j X_{2n}X_{2n+1},$$
and we consider $X_i=u_i\otimes u_i$ and we take the trace, we have (up to a scalar)
$$\sum_{i,j}(u_1,u_2)\cdots (u_{2n-3},u_{2n-2})(u_{2n},u_{2n+1})[u_1,...,u_{2n-1},e_i](u_{2n-1},e_j)(u_{2n}, e_i)(u_{2n+1},e_j),$$
summing on $i,j$ we can rewrite it as
$$\sum_{i,j}(u_1,u_2)\cdots (u_{2n-3},u_{2n-2})(u_{2n},u_{2n+1})[u_1,...,u_{2n-1},u_{2n}](u_{2n-1},u_{2n+1}).$$
We see that there is not an element of type $(u,u)$ and so when we antisymmetrize we can't obtain $T_0$.
\end{proof}
So we can deduce:
\begin{proposition}
$$T_{n-1}\wedge X^2=-\sum T_i\wedge X^{4(n-i)-2}+kQ\wedge d\Omega,$$
with $k$ a non zero scalar.
\end{proposition}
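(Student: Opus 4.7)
The plan is to invoke Theorem~\ref{CASOPARI} to expand $T_{n-1}\wedge X^2\in B^+$ uniquely in the free $A_n$-basis,
$$T_{n-1}\wedge X^2=\sum_{m=0}^{n-2}\bigl(a_m\wedge X^{4m+2}+b_m\wedge X^{4m+3}\bigr)+c_\Omega\wedge\Omega+c_{d\Omega}\wedge d\Omega,$$
with $a_m,b_m,c_\Omega,c_{d\Omega}\in A_n$ homogeneous of degrees $4(n-m)-3$, $4(n-m-1)$, $2n+1$, and $2n$ respectively. A first pass with the degree inventory of $A_n=\Lambda[T_0,\dots,T_{n-2}]\otimes \mathbb{C}[Q]/(Q^2)$ kills many coefficients automatically, since several of the required degrees are not represented in $A_n$.

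Next, to identify $c_{d\Omega}$ I would wedge with $\Omega$ and take the trace. By \eqref{duale1} the left-hand side becomes $-T_{n-1}\wedge Q$, while on the right-hand side only the term $c_{d\Omega}\wedge \Tr(d\Omega\wedge\Omega)$ contributes to the $T_{n-1}$-summand of the decomposition $A=A_n\oplus T_{n-1}\wedge A_n$, thanks to \eqref{pairingpfaffiano}: $\Tr(\Omega\wedge d\Omega)=qT_{n-1}+R$ with $R\in A_n$. Projecting onto $T_{n-1}\wedge A_n$ yields $c_{d\Omega}=kQ$ with $k=-1/q\neq 0$. Dually, pairing with $d\Omega$ makes the left-hand side vanish by \eqref{duale2}, while on the right $\Tr(d\Omega\wedge d\Omega)=0$ (graded skew-symmetry of the scalar trace on odd-degree forms), and \eqref{pairingpfaffiano} forces $c_\Omega=0$ by the same projection.

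With $c_\Omega$ and $c_{d\Omega}$ in hand, I would recover the $a_m$ and the remaining $b_m$ by pairing with $X^k$ for carefully chosen $k$. Taking $k=4h-1$ with $h\in\{1,\dots,n-1\}$, the identity $\Tr(X^m)=T_{(m-1)/4}$ (nonzero only when $m\equiv 1\pmod 4$ and $m\le 4n-3$) kills all $b_m$-contributions by parity and produces
$$T_{n-1}\wedge T_h=\sum_{\substack{m\geq 0\\ m+h\leq n-1}}a_m\wedge T_{m+h}+kQ\wedge \Tr(d\Omega\wedge X^{4h-1}),$$
which is triangular in the $a_m$'s. Working down from $h=n-1$ gives $a_0=0$ and $a_m=-T_{n-m-1}$ for $m=1,\dots,n-2$. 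Pairing with $k=4h+2$ isolates equations for the $b_m$, and combined with the already-determined $c_\Omega$, $c_{d\Omega}$ yields $b_m=0$.

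The main obstacle will be the bookkeeping of the auxiliary traces $\Tr(X^j\wedge\Omega)$, $\Tr(X^j\wedge d\Omega)$, and $\Tr(\Omega\wedge\Omega)$ that appear on the right-hand sides of all these pairings: one must verify that none of them contributes to the $T_{n-1}$-summand of $A$ (so that the projection onto $T_{n-1}\wedge A_n$ cleanly extracts $c_\Omega$ and $c_{d\Omega}$), and then control their $A_n$-components precisely enough to untangle the triangular systems for the $a_m$ and $b_m$. For small $n$ this vanishing is immediate by degree, since the relevant traces live in $A$-degrees strictly below $4n-3$; for larger $n$ it will require a structural argument combining the Amitsur--Levitski bound $X^{4n}=0$ with the vanishing of $\Tr(X^m)$ outside $m\equiv 1\pmod 4$.
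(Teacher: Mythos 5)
Your proposal takes essentially the same route as the paper's own proof: expand $T_{n-1}\wedge X^2$ in the free $A_n$-basis of Theorem~\ref{CASOPARI}, then pair with $\Omega$, with $d\Omega$, and with suitable powers $X^k$, using \eqref{duale1}, \eqref{duale2}, \eqref{pairingpfaffiano} and the vanishing of $\Tr(St_h)$ for $h\not\equiv 1\pmod{4}$ to extract the $d\Omega$-coefficient $kQ$, kill the $\Omega$-coefficient, and solve a triangular system for the remaining coefficients. If anything you are more explicit than the paper, which silently ignores the odd-exponent coefficients $b_m$ and the auxiliary traces $\Tr(X^j\wedge\Omega)$, $\Tr(X^j\wedge d\Omega)$; the clean way to discharge the bookkeeping you flag is to note that these traces are semiinvariants, hence lie in $Q\wedge\Lambda[T_0,\dots,T_{n-1}]$, so they are of degree too low to involve $T_{n-1}$ and are annihilated upon wedging with $Q$.
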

\begin{proof}
We can deduce by the Theorem \ref{CASOPARI} that there is an identity of type:
$$T_{n-1}\wedge X^2=\sum P_i\wedge X^i +A\wedge Q\wedge \Omega +kQ\wedge d\Omega,$$
where $A,P_i\in A_n$ and $k$ a scalar. First of all we look at $k$ and $A$. If we multiply by $\Omega$ and we take the trace we have by \eqref{duale1}, \eqref{QQ} and \eqref{pairingpfaffiano}
$$T_{n-1}\wedge Q=kqQ\wedge T_{n-1} +R,$$
with $R\in A_n$, so $k\neq 0$. On the other hand if we multiply by $\Omega$ and we take the trace we have:
$$0=A\wedge Q\wedge T_{n-1}+S,$$
with $S\in A_n$, so $A=0$. Now we pass to determinate $P_i$. We first remark that $Tr(St_{4n-3-i+2})\neq 0 \iff i=2\ \mbox{mod}\, 4$. So if we multiply by $X^{4n-3-i}$, with $i=4k+2$ and we take the trace, we have 
$$T_{n-1}\wedge T_{n-k-1}=\sum P_{4k+2}\wedge T_{n-1},$$
than our claim follows.
\end{proof}

\section{Cases of type $B^-$}

In this section we will consider the cases of type $B^-$. We will see, reporting a computation made by a computer (using the software: Lie), they are not free module on the corresponding subalgebra of the invariants. To do the computation we need traceless matrices. So our computation is made for this kind of matrices. We denote traceless matrices by $N_{2n}$.

Let us start considering the case $\left(\bigwedge (N_{2n}^+)^*\otimes N_{2n}^+\right)^G$. If $n=3$, we have the following table for the multiplicity of $N_{2n}^+$ in $\bigwedge (N_{2n}^+)^*$ (let us recall that, by the Poincar\'e duality, we need to know the multiplicity only until the degree $10=dim(N_{2n}^+)/2$):  
\begin{center}
\begin{tabular}{r|c|c|c|c|c|c|c|c|c|c|c|c|c|c|c|}
degree      &0&1&2&3&4&5&6&7&8&9&10\\ \hline
multiplicity&0&1&0&0&1&2&2&1&1&2&4\\ \hline
\end{tabular}
\end{center}

So the Poincar\'e polynomial is 
$$t^1+2t^5+2t^6+t^7+2t^9+4t^{10}+2t^{11}+t^{12}+t^{13}+2t^{14}+2t^{15}+t^{16}+t^{19},$$
which is not divisible by $(1+t^5)(1+t^6)$ and our claim follows.

The second case is $\left(\bigwedge (N_{2n}^-)^*\otimes N_{2n}^+\right)^G$. We have for $n=4$ that the multiplicity of $N_{2n}^+$ in $\bigwedge (N_{2n}^-)^*$ is given by the following table:
\begin{center}
\begin{tabular}{r|c|c|c|c|c|c|c|c|c|c|c|c|c|c|c|}
degree&0&1&2&3&4&5&6&7&8&9&10&11&12&13&14\\ \hline
multiplicity&0&0&0&1&1&0&2&3&1&1&4&4&1&3&6\\ \hline
\end{tabular}
\end{center}

Let us remark that we need to compute only until the degree $14$, because by the Poincar\'e duality we can deduce the remaining numbers.

We have that the Poincar\'e polynomial is:
 $$t^3+t^4+2t^6+3t^7+t^8+t^9+4t^{10}+4t^{11}+t^{12}+3t^{13}+6t^{14}+3t^{15}+t^{16}+4t^{17}+4t^{18}+t^{19}+t^{20}+3t^{21}+2t^{22}+t^{24}+t^{25}$$
which is not divisible by $(1+t^3)(1+t^4)(1+t^7)$. By classic results on the structure of invariants we have our claim.
\section*{Acknowledgements}
I am grateful to professors C.De Concini and C.Procesi for useful suggestions and advices.

\end{document}